\newcommand{\R}{\mathbb R}
\newcommand{\N}{\mathbb N}
\newcommand{\E}{\mathbb E}
\newcommand{\Pro}{\mathbb P}
\newcommand{\Var}{\mathrm{Var}}
\newcommand{\Uni}{\mathrm{Unif}}
\newcommand{\vol}{\mathrm{vol}}
\def\dint{\textup{d}}
\newcommand{\SSS}{\ensuremath{{\mathbb S}}}
\newcommand{\B}{\ensuremath{{\mathbb B}}}
\newcommand{\eps}{\varepsilon}
\newcommand{\ii}{{\rm{i}}}
\newcommand{\eee}{{\rm e}}
\DeclareMathOperator{\sinc}{sinc}
\newtheorem{thm}{Theorem}[section]
\newtheorem{lemma}[thm]{Lemma}
\newtheorem{df}[thm]{Definition}
\newtheorem{proposition}[thm]{Proposition}
\newtheorem{thmalpha}{Theorem}
\theoremstyle{definition}
\begin{document}

\title{\bf Projections of the uniform distribution on the cube \\
-- a large deviation perspective}

\medskip

\author{Samuel G.~G.~Johnston, Zakhar Kabluchko and Joscha Prochno}



\date{}

\maketitle

\begin{abstract}
\small
Let $\Theta^{(n)}$ be a random vector uniformly distributed on the unit sphere
$\SSS^{n-1}$ in $\mathbb R^n$.  Consider the projection of the uniform distribution on the cube $[-1,1]^n$ to the line spanned by $\Theta^{(n)}$. The projected distribution is the random probability measure $\mu_{\Theta^{(n)}}$ on $\mathbb R$ given by
\[
\mu_{\Theta^{(n)}}(A) := \frac 1 {2^n} \int_{[-1,1]^n}  \mathbb 1\{\langle u, \Theta^{(n)} \rangle \in A\}
\,\dint u,
\]
for Borel subets $A$ of $\mathbb{R}$.  
It is well known that, with probability $1$,  the sequence of random probability measures $\mu_{\Theta^{(n)}}$ converges weakly to the centered Gaussian distribution with variance $1/3$. We prove a large deviation principle for the sequence $\mu_{\Theta^{(n)}}$ on the space of probability measures on $\mathbb R$ with speed $n$. The (good) rate function is explicitly given by $I(\nu(\alpha)) := - \frac{1}{2} \log ( 1 - ||\alpha||_2^2)$ whenever $\nu(\alpha)$ is the law of a random variable of the form
\begin{align*}
\sqrt{1 - ||\alpha||_2^2 } \frac{Z}{\sqrt 3} + \sum_{ k = 1}^\infty \alpha_k U_k,
\end{align*}
where $Z$ is standard Gaussian independent of  $U_1,U_2,\ldots$ which are i.i.d.\ $\Uni[-1,1]$, and $\alpha_1 \geq \alpha_2 \geq \ldots $ is a  non-increasing sequence of  non-negative reals with $||\alpha||_2<1$. We obtain a similar result for random projections of the uniform distribution on the discrete cube $\{-1,+1\}^n$.

\medspace
\vskip 1mm
\noindent{\bf Keywords}. {Cube, large deviation principle, random projection, uniform distribution}\\
{\bf MSC}. Primary 60F10; Secondary 46B06, 52A23
\end{abstract}


\section{Introduction and main results}

Understanding the large deviation behavior of random geometric objects and quantities that are classically studied in asymptotic geometric analysis and high-dimensional probability theory, has attracted considerable attention in the past four years. Central limit theorems had already been obtained in various situations (e.g., \cite{APT2019}, \cite{K2007}, \cite{PPZ14}, \cite{Schmu2001}), but the universality that governs the behavior as the dimension of the space tends to infinity restricts the information that can be extracted, for instance, from lower-dimensional projections of high-dimensional distributions, such as uniform distributions on high-dimensional convex bodies. This motivates the study of large deviation principles, a type of limit theorem which is sensitive to the distribution of the underlying random elements, thereby allowing us, for instance, to distinguish high-dimensional convex bodies from their lower-dimensional projections.
Another motivation stems from a recently established connection \cite{APT2020} between large (and moderate) deviation principles for isotropic log-concave random vectors and the famous Kannan-Lov\'asz-Simonovits conjecture.
Starting with the work of Gantert, Kim, and Ramanan \cite{GKR2017}, who studied large deviation principles for one-dimensional projections of $\ell_p^n$ balls in both the annealed and quenched setting, we have seen a variety of large deviation principles appear in the literature. A version for multidimensional random projections of $\ell_p^n$ balls was obtained by Alonso-Guti\'errez, Prochno, and Th\"ale in \cite{APT2018}. Large deviation principles for $\ell_q$-norms of random vectors distributed according to certain families of distributions on $\ell_p^n$ balls were proved by Kabluchko, Prochno, and Th\"ale in \cite{KPT2019_I,KPT2019_II}. Level--2 large deviations were studied by Kim and Ramanan in the setting of $\ell_p^n$ spheres in \cite{KR2018} and in the non-commutative setting of Schatten class unit balls by Kabluchko, Prochno, and Th\"ale in \cite{KPT2019_sanov}. Recently, Kim, Liao, and Ramanan \cite{KLR2019} have studied both level--1 and level--2 large deviation principles under asymptotic thin-shell conditions, which, on the one hand unveil a similarity to the central limit theorem for convex bodies due to Klartag \cite{K2007} (which follows from the `classical' thin-shell estimate) and, on the other hand, allow to extend earlier works for $\ell_p^n$ balls to the setting of Orlicz balls.

In this paper, we study the large deviation behavior of the random one-dimensional  projections of uniform distributions on the cubes $[-1,1]^n$, as $n\to\infty$. To be more precise, let $n\in\N$ and $U^{(n)} := (U_1,\ldots,U_n)$ be a random vector uniformly distributed on the $n$-dimensional unit cube $\B_\infty^n:=[-1,1]^n$. For a vector $\theta^{(n)}:=(\theta_1,\ldots,\theta_n)$ in the unit sphere $\mathbb{S}^{n-1}:=\{x\in\R^n\,:\,\|x\|_2=1 \}$ let $\mu_{\theta^{(n)}}$ be the (deterministic) probability measure on $\mathbb{R}$ defined by setting
\begin{align*}
\mu_{\theta^{(n)}}(A) := \mathbb{P} \left[ \langle U^{(n)}, \theta^{(n)} \rangle \in A \right],
\end{align*}
for every Borel set $A\subset \mathbb R$. In words, $\mu_{\theta^{(n)}}$ is the projection of the uniform distribution on the cube $\B_\infty^n$ onto the line determined by the direction $\theta^{(n)}\in\SSS^{n-1}$. Denoting by $\text{Law}(X)$  the law of a random variable $X$,  we can write
$$
\mu_{\theta^{(n)}} = \text{Law} \left(\theta_1 U_1 + \ldots+ \theta_n U_n\right).
$$
We are interested in the limit properties of $\mu_{\theta^{(n)}}$ as $n\to\infty$, where $(\theta^{(n)})_{n\in \mathbb N}$ is a sequence of directions  with $\theta^{(n)} \in \mathbb S^{n-1}$ for $n\in\N$.
For example, if $\theta^{(n)} = (\frac 1 {\sqrt n}, \ldots, \frac 1 {\sqrt n})$, $n\in\N$, then the classical central limit theorem implies the weak convergence
\[
\mu_{\theta^{(n)}} \xrightarrow[n\to\infty]{w}\mathcal N(0,1/3),
\]
since $\E[U_1]=0$ and $\Var[U_1]=1/3$. More generally, the Lindeberg central limit theorem implies that the same conclusion holds for every sequence of directions $(\theta^{(n)})_{n\in \mathbb N}$ provided that $\lim_{n\to\infty} \|\theta^{(n)}\|_\infty = 0$. On the other hand, the example $\theta^{(n)} = (1,0,\ldots,0)\in \mathbb S^{n-1}$ clearly shows that limit laws other than the Gaussian one are possible. Still, the weak convergence to the Gaussian law is typical in the following sense:
for every $n\in\mathbb N$ let $\Theta^{(n)}$ be a random vector uniformly distributed on the unit sphere $\mathbb{S}^{n-1}$; note that $\mu_{\Theta^{(n)}}$ is now a random probability measure. Then, it is easy to check that $\|\Theta^{(n)}\|_\infty$ converges to $0$ a.s.~and so it follows that, with probability $1$,
\[
\mu_{\Theta^{(n)}} \xrightarrow[n\to\infty]{w}\mathcal N(0,1/3).
\]
This well-known fact is a simple special case of the celebrated central limit theorem for convex bodies obtained by Klartag~\cite{K2007}.
While this shows that the typical behavior is universal, our main results will  make clear that the atypical behavior is not.

\subsection{Main results}

Our main results characterize the probabilities of atypical directions in terms of a large deviation principle. In what follows, let $\mathcal M_1(\mathbb R)$ be the set of Borel probability measures on $\mathbb R$ endowed with the weak topology.

\begin{thmalpha}\label{theo:main}
For every $n\in\mathbb N$ let $\Theta^{(n)}$ be a random vector uniformly distributed on $\mathbb{S}^{n-1}$. Then the sequence of random measures $(\mu_{\Theta^{(n)}})_{n\in \mathbb N}$ satisfies a large deviation principle on $\mathcal M_1(\mathbb R)$ at speed $n$ and with a good rate function $I: \mathcal M_1(\mathbb R)\to [0,+\infty]$ given by
\begin{align*}
I(\nu(\alpha)) = - \frac{1}{2} \log \left( 1 - \|\alpha\|_2^2 \right)
\end{align*}
whenever $\nu(\alpha)$ is equal to the law of the random variable
$$
\sqrt{1 - \|\alpha\|_2^2 }  \frac{ Z}{ \sqrt{3}}  + \sum_{ k =1}^\infty \alpha_k U_k,
$$
where $\alpha_1\geq \alpha_2\geq \ldots \geq 0$ is a non-increasing sequence of non-negative reals satisfying $\|\alpha\|_2^2 := \sum_{ k = 1}^\infty \alpha_k^2 < 1$, $Z$ is a standard Gaussian random variable, $U_1,U_2,\ldots$ are uniform random variables on $[-1,1]$, and $Z,U_1,U_2,\ldots$ are independent. Whenever $\nu$ is not of this form, $I(\nu) = + \infty$.
\end{thmalpha}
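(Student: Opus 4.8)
The plan is to establish the large deviation principle via a projective-limit argument on finite-dimensional marginals, combined with an exponential tightness estimate, following the general strategy of the Dawson–Gärtner theorem and the contraction principle. The starting point is the observation that the random measure $\mu_{\Theta^{(n)}}$ is a measurable function of the random direction $\Theta^{(n)}$, which is distributed as $G^{(n)}/\|G^{(n)}\|_2$ for a standard Gaussian vector $G^{(n)} = (g_1,\dots,g_n)$ in $\R^n$. Thus the first step is to transfer the problem to a statement about the rescaled coordinates: the ordered squares of the coordinates of $\Theta^{(n)}$, i.e.\ the vector $(\theta_{(1)}^2 \geq \theta_{(2)}^2 \geq \dots)$ behave, at the level of large deviations, like the point process obtained from $(g_{(1)}^2, g_{(2)}^2,\dots)$ normalized by $\sum g_k^2 \approx n$. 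The key heuristic is that a deviation of $\mu_{\Theta^{(n)}}$ away from $\mathcal N(0,1/3)$ forces a bounded number of coordinates $\theta_{k}$ to be macroscopically large (of order $1$), with the corresponding squared sizes $\alpha_1^2, \alpha_2^2, \dots$, while the bulk of the mass $1 - \|\alpha\|_2^2$ is spread thinly and contributes a Gaussian summand $\sqrt{1-\|\alpha\|_2^2}\, Z/\sqrt 3$ via the CLT. The cost of pinning down a coordinate $\theta_k$ to be of size roughly $\alpha_k$ is governed by the large deviations of a single chi-square variable relative to its typical value, and summing these costs should yield exactly $-\tfrac12 \log(1 - \|\alpha\|_2^2) = -\tfrac12 \sum_k \log(1-\beta_k)$-type expressions after an Euler-product manipulation; one must check this algebraic identity carefully.

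Concretely, I would proceed as follows. First, fix $m \in \N$ and study the joint law of the $m$ largest coordinates $(\Theta_{(1)}, \dots, \Theta_{(m)})$ together with the ``residual'' that generates the Gaussian part; establish a large deviation principle for this finite-dimensional object at speed $n$ using Cramér-type estimates for the Gaussian vector and the fact that $\|G^{(n)}\|_2^2/n \to 1$ with Gaussian-tail large deviations. Second, identify the map $\Phi_m$ sending $(\alpha_1,\dots,\alpha_m)$ and a Gaussian coefficient to the law $\mathrm{Law}\bigl(\sqrt{1-\sum_{k\le m}\alpha_k^2}\, Z/\sqrt3 + \sum_{k\le m}\alpha_k U_k\bigr)$, which is continuous into $\mathcal M_1(\R)$ with the weak topology (continuity of characteristic functions does the job here). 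Third, apply the contraction principle to push forward the finite-level LDP and then let $m\to\infty$: one shows that the truncated rate functions increase to $I$ and, crucially, that the sequence $(\mu_{\Theta^{(n)}})_n$ is \emph{exponentially tight} on $\mathcal M_1(\R)$ at speed $n$, so that the inverse contraction / projective limit yields the full LDP with the claimed good rate function. Exponential tightness follows because $\langle U^{(n)},\Theta^{(n)}\rangle$ has sub-Gaussian tails uniformly in the direction, giving uniform control of the tails of $\mu_{\Theta^{(n)}}$ and hence precompactness at exponential scale.

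The main obstacle I anticipate is twofold. On the technical side, the delicate point is the passage from the ordered-coordinate description to the measure $\mu_{\Theta^{(n)}}$: one must show that no probability mass ``escapes'' into an intermediate regime where, say, $\sqrt n$ many coordinates are of size $n^{-1/4}$ — such configurations must be shown to be either negligible at exponential scale or to still produce (via a secondary CLT) only the Gaussian summand already accounted for. This requires a careful two-scale decomposition of the coordinates of $\Theta^{(n)}$ into a ``heavy'' part (finitely many large coordinates) and a ``light'' part whose empirical contribution is asymptotically Gaussian, together with a lower-semicontinuity argument showing the rate function cannot be improved by such mixed strategies. On the combinatorial-analytic side, verifying that the minimization over all decompositions producing a fixed target law $\nu(\alpha)$ is achieved precisely at the ``pure'' decomposition with cost $-\tfrac12\log(1-\|\alpha\|_2^2)$ — and that two different sequences $\alpha$ never produce the same $\nu$ (an identifiability statement, provable by reading off the $\alpha_k$ from the characteristic function $\prod_k \sinc(\alpha_k t)\cdot e^{-(1-\|\alpha\|_2^2)t^2/6}$) — is essential for the rate function to be well defined, and I would treat this identifiability lemma as a separate preliminary step.
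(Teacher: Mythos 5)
Your overall reduction (finitely many macroscopically large coordinates, identifiability of $\nu(\alpha)$ through the characteristic function $\eee^{-(1-\|\alpha\|_2^2)t^2/6}\prod_k \sinc(\alpha_k t)$, total cost $-\tfrac12\log(1-\|\alpha\|_2^2)$) matches the paper, but the route you propose for the infinite-dimensional passage -- finite-level LDPs, contraction through the truncated maps $\Phi_m$, then $m\to\infty$ -- has a genuine gap exactly at the point you yourself flag as the main obstacle. The measure $\mu_{\Theta^{(n)}}$ is \emph{not} a function of the top $m$ order statistics, so the contraction principle does not apply to $\Phi_m$ directly; to let $m\to\infty$ you would need an exponentially-good-approximation statement, i.e.\ that for every $\delta>0$ the probability that $\mu_{\Theta^{(n)}}$ and $\Phi_m(\Theta_{1:n},\dots,\Theta_{m:n})$ differ by more than $\delta$ is superexponentially small in $n$ as $m\to\infty$, uniformly over configurations of the remaining $n-m$ coordinates that are only exponentially unlikely. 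Your proposal announces this two-scale decomposition but gives no argument, and that is precisely the hard analytic content. The paper circumvents it entirely: it applies the map $\alpha\mapsto\nu(\alpha)$ to the \emph{full} order-statistics vector $\eta_n=(\Theta_{1:n},\dots,\Theta_{n:n},0,\dots)$, for which $\nu(\eta_n)=\mu_{\Theta^{(n)}}$ holds exactly (since $\|\eta_n\|_2=1$, the Gaussian summand vanishes), proves an LDP for $\eta_n$ on the compact space $\mathcal W$ of ordered subunit $\ell_2$ sequences with the topology of coordinatewise convergence (explicit marginal density of $(\Theta_1^{(n)},\dots,\Theta_\ell^{(n)})$ plus a base-of-topology argument; compactness of $\mathcal W$ gives exponential tightness and goodness for free), and transfers it by showing that $\alpha\mapsto\nu(\alpha)$ is a homeomorphism of $\mathcal W$ onto its image. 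The intermediate-scale scenario you worry about (many coordinates of size $n^{-1/4}$, say) is exactly what the continuity proof absorbs: coordinatewise limits do not preserve $\|\alpha\|_2$, and the lost $\ell_2$ mass reappears in the limit as the Gaussian component -- proved via characteristic functions -- so no exponential approximation estimate is ever needed. If you keep your truncation scheme, you must supply a quantitative CLT/Berry--Esseen bound for $\sum_{k>m}\Theta_{k:n}U_k$ valid on all events of probability at least $\eee^{-Cn}$; otherwise the argument does not close.

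A second, more local caution: the one-coordinate cost as you describe it (``large deviations of a single chi-square variable relative to its typical value'') gives the wrong rate. If the residual norm is held at its typical value $\|G^{(n)}\|_2^2\approx n$, pinning $\Theta_1\approx\alpha$ costs $\tfrac{\alpha^2}{2(1-\alpha^2)}$, not $-\tfrac12\log(1-\alpha^2)$; the correct rate arises only from a joint deviation in which $g_1\approx\alpha\sqrt n$ \emph{and} the residual chi-square drops to $(1-\alpha^2)n$, and then the sequential ``Euler-product'' bookkeeping does telescope, with $\beta_k=\alpha_k^2/(1-\alpha_1^2-\cdots-\alpha_{k-1}^2)$, to $-\tfrac12\log(1-\|\alpha\|_2^2)$. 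The paper gets all of this in one stroke from the exact density of the first $\ell$ coordinates of a uniform point on $\mathbb S^{n-1}$, proportional to $(1-s_1^2-\cdots-s_\ell^2)^{(n-\ell-2)/2}$, which would also be the simplest input for your finite-dimensional step. Your exponential tightness remark is correct (indeed, by Hoeffding all $\mu_\theta$, $\theta\in\mathbb S^{n-1}$, lie in one fixed compact subset of $\mathcal M_1(\mathbb R)$), but in the paper this role is played by the compactness of $\mathcal W$; and the identifiability lemma you isolate is exactly the paper's Lemma~\ref{lem:equality nu_alpha nu_beta}, proved the same way by reading off the $\alpha_k$ from the zeros of the characteristic function.
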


Observe that the function $I(\nu)$ vanishes if and only if $\nu$ is the law of a centered Gaussian variable with variance $1/3$, which corresponds to the typical behavior described above.

The proof we shall present for Theorem \ref{theo:main} carries over to the uniform distribution on the discrete cube $\{-1,+1\}^n$, where the independent random variables $U_1,\dots,U_n \sim \Uni[-1,1]$ are replaced by independent Rademacher random variables $V_1,\dots,V_n$, where $\Pro[V_1=+1]=\frac{1}{2}=\Pro[V_1=-1]$. Since those Rademacher random variables are centered with variance $1$, we do not obtain a Gaussian of variance $1/3$, but a standard Gaussian. In what follows, for $V^{(n)}:=(V_1,\dots,V_n)$, we shall write
\begin{align*}
\mu_{\Theta^{(n)}}^{\text{discr}}(A) := \mathbb{P} \left[ \langle V^{(n)}, \Theta^{(n)} \rangle \in A \right],\qquad A\subset \R \,\,\text{Borel},\,\, \Theta^{(n)}\sim\Uni(\SSS^{n-1})
\end{align*}
for the random probability measure corresponding to the random projection of the discrete uniform distribution on $\{-1,+1\}^n$.

\begin{thmalpha}\label{thm:main_B}
For every $n\in\mathbb N$ let $\Theta^{(n)}$ be a random vector uniformly distributed on $\mathbb{S}^{n-1}$. Then the sequence of random measures $(\mu_{\Theta^{(n)}}^{\text{discr}})_{n\in \mathbb N}$ satisfies a large deviation principle on $\mathcal M_1(\mathbb R)$ at speed $n$ and with a good rate function $I: \mathcal M_1(\mathbb R)\to [0,+\infty]$ given by
\begin{align*}
I(\nu(\alpha)) = - \frac{1}{2} \log \left( 1 - \|\alpha\|_2^2 \right)
\end{align*}
whenever $\nu(\alpha)$ is equal to the law of the random variable
$$
\sqrt{1 - \|\alpha\|_2^2 }  Z  + \sum_{ k =1}^\infty \alpha_k V_k,
$$
where $\alpha_1\geq \alpha_2\geq \ldots \geq 0$ is a non-increasing sequence of non-negative reals satisfying $\|\alpha\|_2^2 := \sum_{ k = 1}^\infty \alpha_k^2 < 1$, $Z$ is a standard Gaussian random variable, $V_1,V_2,\ldots$ are Rademacher random variables, and $Z,V_1,V_2,\ldots$ are independent. Whenever $\nu$ is not of this form, $I(\nu) = + \infty$.
\end{thmalpha}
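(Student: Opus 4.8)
\medskip
\noindent\emph{Proof sketch.}\quad The plan is to follow the proof of Theorem~\ref{theo:main} with the i.i.d.\ $\Uni[-1,1]$ variables $U_k$ replaced throughout by i.i.d.\ Rademacher variables $V_k$; since $\E[V_1]=0$ and $\Var[V_1]=1$ rather than $\tfrac13$, the only change in the conclusion is that the residual Gaussian $\sqrt{1-\|\alpha\|_2^2}\,Z/\sqrt3$ of Theorem~\ref{theo:main} becomes $\sqrt{1-\|\alpha\|_2^2}\,Z$. The starting point is the representation $\Theta^{(n)}\overset{d}{=}G^{(n)}/\|G^{(n)}\|_2$ with $G^{(n)}=(g_1,\dots,g_n)$ a standard Gaussian vector, so that $\mu_{\Theta^{(n)}}^{\text{discr}}$ is the (random) law of $\sum_{k=1}^{n}(g_k/\|G^{(n)}\|_2)V_k$ over the $V_k$. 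The picture behind the theorem is that under an atypical event the descending rearrangement of $(|\Theta_k^{(n)}|)_{k\le n}$, extended by zeros, converges coordinatewise to a sorted sequence $\alpha_1\ge\alpha_2\ge\cdots\ge0$: the finitely many ``macroscopic'' coordinates near $\alpha_1,\alpha_2,\dots$ produce the term $\sum_k\alpha_k V_k$, while the remaining coordinates tend to $0$ uniformly and, carrying the residual $\ell_2$-mass $1-\|\alpha\|_2^2$, produce $\sqrt{1-\|\alpha\|_2^2}\,Z$ by a Lindeberg-type central limit theorem.

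Two lemmas carry the argument, and both are identical to those used for Theorem~\ref{theo:main}. The first is a \emph{classification lemma}: for deterministic $\theta^{(n)}\in\SSS^{n-1}$, weak convergence $\text{Law}\bigl(\sum_k\theta_k^{(n)}V_k\bigr)\to\nu$ holds if and only if the descending rearrangement $\beta^{(n)}$ of $(|\theta_k^{(n)}|)_k$ converges coordinatewise to an admissible $\alpha$ with $\nu=\nu(\alpha)$, and $\alpha\mapsto\nu(\alpha)$ is continuous for the coordinatewise topology and injective. One proves this via characteristic functions: $\E\exp(\ii t\sum_k\theta_kV_k)=\prod_k\cos(t\theta_k)$; splitting off the $m$ largest factors and using $\prod_{k\le m}\cos(t\beta_k^{(n)})\to\prod_{k\le m}\cos(t\alpha_k)$ together with the uniform bound $(\beta_k^{(n)})^2\le 1/(m+1)$ for $k>m$ (from sortedness and $\|\theta^{(n)}\|_2=1$), one gets $\sum_{k>m}\log\cos(t\beta_k^{(n)})=-\tfrac12 t^2\bigl(1-\sum_{k\le m}(\beta_k^{(n)})^2\bigr)+O(t^4/m)$, and letting $n\to\infty$ then $m\to\infty$ recovers the characteristic function $\prod_k\cos(t\alpha_k)\,e^{-t^2(1-\|\alpha\|_2^2)/2}$ of $\nu(\alpha)$; injectivity follows by comparing cumulants, since those of order $\ge 4$ recover the power sums $\sum_k\alpha_k^{2j}$ and hence the multiset $\{\alpha_k\}$. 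The second is a \emph{cost estimate} not involving the $V_k$: the squared coordinates of $\Theta^{(n)}$ are $\mathrm{Dirichlet}(\tfrac12,\dots,\tfrac12)$ distributed, so $\Theta_1^2+\dots+\Theta_m^2\sim\Beta(m/2,(n-m)/2)$ for fixed $m$, whence for $s\in[0,1)$
\[
\lim_{n\to\infty}\frac1n\log\pr{\Theta_1^2+\dots+\Theta_m^2>s}=\tfrac12\log(1-s);
\]
the same upper bound holds for the sum of the $m$ largest squared coordinates after a union bound over the $\binom nm$ sub-collections (subexponential at speed $n$ for fixed $m$), while a matching lower bound for the event that $(\Theta_1^2,\dots,\Theta_m^2)$ lies near $(\alpha_1^2,\dots,\alpha_m^2)$ comes from the explicit Dirichlet density.

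The LDP is then assembled in the usual two-sided way. For the lower bound at $\nu(\alpha)$: fix $m$ large enough that $\nu(\alpha_1,\dots,\alpha_m,0,0,\dots)$ lies in the given weak neighbourhood of $\nu(\alpha)$, force $(\Theta_1^2,\dots,\Theta_m^2)$ near $(\alpha_1^2,\dots,\alpha_m^2)$ with the remaining $n-m$ coordinates typical (so their maximum tends to $0$ and the classification lemma applies), and read off the rate $-\tfrac12\log(1-\sum_{k\le m}\alpha_k^2)\le-\tfrac12\log(1-\|\alpha\|_2^2)=I(\nu(\alpha))$ from the density. For the upper bound: exponential tightness reduces matters to weak neighbourhoods of measures $\nu(\alpha)$, and by the classification lemma such a neighbourhood forces the rearrangement of $(|\Theta_k^{(n)}|)_k$ to be coordinatewise close to $\alpha$, hence $\Theta_1^2+\dots+\Theta_m^2>\sum_{k\le m}\alpha_k^2-o(1)$ for a suitable $m\to\infty$, and the cost estimate bounds this probability above with exponential rate $-\tfrac12\log(1-\|\alpha\|_2^2)+o(1)$, i.e.\ by $e^{-n(I(\nu(\alpha))-\eps)}$. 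Exponential tightness of $(\mu_{\Theta^{(n)}}^{\text{discr}})_n$ and goodness of $I$ are essentially free: every $\mu_\theta^{\text{discr}}$, and every $\nu(\alpha)$, has mean $0$ and variance $\|\theta\|_2^2=1$, so by Chebyshev the whole sequence is deterministically supported in one weakly compact subset of $\mathcal M_1(\mathbb R)$, and the sublevel sets $\{I\le c\}=\{\nu(\alpha):\|\alpha\|_2^2\le 1-e^{-2c}\}$ are weakly closed and contained in it.

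I expect the classification lemma to be the main obstacle: matching the weak topology on $\mathcal M_1(\mathbb R)$ with the coordinatewise topology on descending rearrangements (in particular excluding ``intermediate-scale'' coordinates and establishing injectivity of $\nu(\cdot)$), and making this quantitative enough that the pointwise $\Beta$-tail bound becomes a genuine large deviation upper bound over weak neighbourhoods. Once that is in place exactly as for Theorem~\ref{theo:main}, the sole arithmetic change is that $\log\cos(t\theta_k)\sim-\tfrac12 t^2\theta_k^2$ takes the place of $\log\sinc(t\theta_k)\sim-\tfrac16 t^2\theta_k^2$, which is precisely why a standard Gaussian, rather than an $\mathcal N(0,1/3)$ variable, appears in the residual term.
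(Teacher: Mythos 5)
Your proposal is correct and follows essentially the same route as the paper: the paper proves Theorem~\ref{thm:main_B} by observing that the proof of Theorem~\ref{theo:main} carries over verbatim with $U_k$ replaced by $V_k$ (so $\log\sinc(t\theta_k)\sim -t^2\theta_k^2/6$ becomes $\log\cos(t\theta_k)\sim -t^2\theta_k^2/2$), i.e.\ by reducing to the decreasing rearrangement of $(|\Theta_k^{(n)}|)_k$, proving via characteristic functions that $\alpha\mapsto\nu(\alpha)$ is a homeomorphism from the compact sequence space onto its image (your ``classification lemma''), and extracting the rate $\tfrac12\log(1-\|\alpha\|_2^2)$ from the Dirichlet/Beta-type density of the spherical coordinates with matching neighbourhood lower and upper bounds. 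Your only deviations are cosmetic (injectivity via cumulants rather than via zeros of the analytically continued characteristic function, and assembling the LDP directly on $\mathcal M_1(\mathbb R)$ instead of first on $\mathcal W$ and then transporting it through the homeomorphism), and both work.
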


Let us briefly explain the idea of proof of Theorem~\ref{theo:main}. The law of the random variable $\theta_1 U_1 + \ldots+ \theta_n U_n$ does not change if we replace $\theta_1,\ldots,\theta_n$ by the decreasing order statistics of the absolute values $|\theta_1|,\ldots,|\theta_n|$. Therefore, our task reduces essentially to establishing a large deviation principle for the order statistics of  $|\Theta_1^{(n)}|,\ldots, |\Theta_n^{(n)}|$.  The reduction to this problem is justified in Section~\ref{subsec:passing}, while the LDP for the order statistics is established in Section~\ref{subsec:LDP_order}.

\section{Notation and Preliminaries}

Let us briefly recall (and complement) the basic notation used throughout this paper. If $n\in\N$, then $\mathbb{S}^{n-1}:=\{x\in\R^n\,:\,\|x\|_2=1 \}$ is the Euclidean unit sphere, and the cube in $\R^n$ is denoted by $\B_\infty^n:=[-1,1]^n$. The standard inner product on $\R^n$ is denoted by $\langle \cdot, \cdot\rangle$. For a Borel measurable set $A\subset \R^n$, we denote by $\vol_n(A)$ its $n$-dimensional Lebesgue measure. For a set $A\subset\R^n$, we denote by $A^\circ$ and $\overline{A}$ its interior and closure respectively.

Let us continue with some notions and results from large deviation theory. For a thorough introduction to this topic, we refer the reader to \cite{DZ2010}.

\begin{df}
Let $(\xi_n)_{n\in\N}$ be a sequence of random elements taking values in some metric space $M$. Further, let $(s_n)_{n\in\N}$ be a sequence of positive reals
with $s_n\uparrow\infty$ and $\mathcal{I}:M\to[0,+\infty]$ be a lower semi-continuous function.
We say that $(\xi_n)_{n\in\N}$ satisfies a (full) large deviations principle (LDP) with speed $s_n$ and a rate function $\mathcal{I}$ if
\begin{equation}\label{eq:LDPdefinition}
\begin{split}
-\inf_{x\in A^\circ}\mathcal{I}(x)
\leq\liminf_{n\to\infty}{1\over s_n}\log\Pro\left[\xi_n \in A \right]
\leq\limsup_{n\to\infty}{1\over s_n}\log\Pro\left[\xi_n \in A \right]\leq -\inf_{x\in\overline{A}}\mathcal{I}(x)
\end{split}
\end{equation}
for all Borel sets $A\subset M$. The rate function $\mathcal I$ is called good if its lower level sets
$\{x\in M\,:\, \mathcal{I}(x) \leq \alpha \}$ are compact for all finite $\alpha\geq 0$.
We say that $(\xi_n)_{n\in\N}$ satisfies a weak LDP with speed $s_n$ and rate function $\mathcal{I}$ if the rightmost upper bound in \eqref{eq:LDPdefinition} is valid only for compact sets $A\subset M$.
\end{df}

What separates a weak from a full LDP is the so-called exponential tightness of the sequence of random variables (see, e.g., \cite[Lemma 1.2.18]{DZ2010}).

\begin{proposition}\label{prop:equivalence weak and full LDP}
Let $(\xi_n)_{n\in\N}$ be a sequence of random elements taking values in $M$. Suppose that it satisfies a weak LDP with speed $s_n$ and rate function $\mathcal{I}$. Then $(\xi_n)_{n\in\N}$ satisfies a full LDP if and only if the sequence is exponentially tight, that is, if and only if for every $C\in(0,\infty)$ there exists a compact set $K_C\subset M$ such that
$$
\limsup_{n\to\infty}{1\over s_n}\log \Pro\left[\xi_n\notin K_C\right]<-C\,.
$$
In this case, the rate function $\mathcal I$ is good.
\end{proposition}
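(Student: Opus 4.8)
This is a classical fact from large deviation theory, and I would follow the route of \cite[Lemma 1.2.18]{DZ2010}. The only substantial implication is that a weak LDP together with exponential tightness yields the full LDP (and a good rate function); the reverse implication is standard and, since it is not used elsewhere in the paper, I would simply cite \cite{DZ2010} for it.

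First observe that the lower bound in \eqref{eq:LDPdefinition}, namely $-\inf_{x\in A^{\circ}}\mathcal{I}(x)\le\liminf_{n\to\infty}s_n^{-1}\log\Pro[\xi_n\in A]$, involves only the open set $A^{\circ}$ and is therefore already part of the weak LDP; nothing needs to be done there. The work lies in upgrading the upper bound from compact sets to arbitrary closed sets. Fix a closed set $F\subset M$ and a constant $C\in(0,\infty)$, and let $K_C\subset M$ be the compact set supplied by exponential tightness. Writing
\[
\Pro[\xi_n\in F]\le\Pro[\xi_n\in F\cap K_C]+\Pro[\xi_n\notin K_C],
\]
the set $F\cap K_C$ is compact, so the weak LDP gives $\limsup_{n\to\infty}s_n^{-1}\log\Pro[\xi_n\in F\cap K_C]\le-\inf_{F\cap K_C}\mathcal{I}\le-\inf_{F}\mathcal{I}$, while exponential tightness gives $\limsup_{n\to\infty}s_n^{-1}\log\Pro[\xi_n\notin K_C]<-C$. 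Combining the two pieces via the elementary inequality $\limsup_{n}s_n^{-1}\log(a_n+b_n)\le\max\{\limsup_{n}s_n^{-1}\log a_n,\ \limsup_{n}s_n^{-1}\log b_n\}$ (which follows from $a_n+b_n\le 2\max\{a_n,b_n\}$ together with $s_n\to\infty$) yields $\limsup_{n\to\infty}s_n^{-1}\log\Pro[\xi_n\in F]\le\max\{-\inf_{F}\mathcal{I},\,-C\}$. Since the left-hand side does not depend on $C$, letting $C\to\infty$ gives the full upper bound $\limsup_{n\to\infty}s_n^{-1}\log\Pro[\xi_n\in F]\le-\inf_{F}\mathcal{I}$, and hence the full LDP.

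It remains to check that $\mathcal{I}$ is good, i.e.\ that each sub-level set $\Psi_\alpha:=\{x\in M:\mathcal{I}(x)\le\alpha\}$ with $\alpha<\infty$ is compact. It is closed by lower semi-continuity of $\mathcal{I}$, so it suffices to place it inside a compact set. Fix $C>\alpha$ and let $K_C$ be as above. If some $x\in\Psi_\alpha$ were not in $K_C$, then $x$ would lie in the open set $M\setminus K_C$, and the (already available) lower bound applied to this open set would give $\liminf_{n\to\infty}s_n^{-1}\log\Pro[\xi_n\notin K_C]\ge-\mathcal{I}(x)\ge-\alpha>-C$, contradicting $\limsup_{n\to\infty}s_n^{-1}\log\Pro[\xi_n\notin K_C]<-C$. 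Hence $\Psi_\alpha\subset K_C$, so $\Psi_\alpha$ is a closed subset of a compact set, therefore compact, and $\mathcal{I}$ is good.

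I do not expect a genuine obstacle here; the one point that must be handled with care is that the weak LDP provides the upper bound only for compact sets, so the whole argument turns on using the compact set $K_C$ from exponential tightness to truncate $\Pro[\xi_n\in F]$ and on correctly invoking the max-of-exponential-rates inequality to recombine the truncated probability with the tail, with $C\to\infty$ at the end removing the truncation error.
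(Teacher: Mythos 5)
Your argument is correct and is exactly the standard proof of the result the paper only cites (\cite[Lemma 1.2.18]{DZ2010}): truncating $\Pro[\xi_n\in F]$ by the compact set $K_C$ from exponential tightness, recombining via the max-of-rates inequality and letting $C\to\infty$, and then trapping each sub-level set $\{\mathcal I\le\alpha\}$ inside $K_C$ with $C>\alpha$ via the open-set lower bound applied to $M\setminus K_C$. Deferring the converse implication to the citation is consistent with the paper, which uses only the direction you prove; just note that this converse is in fact more delicate than ``standard'' (it requires goodness of the rate function and a nice, e.g.\ Polish, space), so it is fortunate that nothing in the paper depends on it.
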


The following result (see, e.g., \cite[Theorem 4.1.11]{DZ2010}) shows that to prove a weak LDP it is sufficient to consider a  base of the underlying topology on a metric space.

\begin{proposition}\label{prop:basis topology}
Let $\mathcal T$ be a base of the topology in a metric space $M$. Let $(\xi_n)_{n\in\N}$ be a sequence of $M$-valued random elements and assume $s_n\uparrow\infty$.  If for every $w\in M$,
\[
\mathcal I(w)
=
- \inf_{A\in\mathcal T:\, w\in A} \limsup_{n\to\infty} \frac 1 {s_n} \log \Pro \left[\xi_n \in A\right]
=
- \inf_{A\in\mathcal T:\, w\in A} \liminf_{n\to\infty} \frac 1 {s_n} \log \Pro \left[\xi_n \in A\right],
\]
then $(\xi_n)_{n\in\N}$ satisfies a weak LDP with speed $s_n$ and rate function $\mathcal{I}$.
\end{proposition}

\section{Proof of Theorem \ref{theo:main}}

\subsection{Passing to the space of sequences}\label{subsec:passing}
Consider the set
\[
\mathcal W := \Big\{\alpha = (\alpha_1,\alpha_2,\ldots)\in \mathbb R^\infty: \alpha_1\geq \alpha_2 \geq \ldots \geq 0, \|\alpha\|_2 \leq 1\Big\}
\]
endowed with the topology of coordinatewise convergence.  Using Fatou's lemma, one checks that the function $\alpha \mapsto \|\alpha\|_2$ is lower-semicontinuous on $\mathcal W$. However, it is not continuous. To see this, consider the elements $\alpha^{(n)}:= (1/\sqrt n,\ldots, 1/\sqrt n, 0,0,\ldots)$ (with $n$ non-zero terms) which converge to $0$ in $\mathcal W$ and satisfy $\|\alpha^{(n)}\|_2 = 1$. Moreover, combining Tikhonov's theorem with Fatou's lemma,  it is easy to check that $\mathcal W$ is a compact (metrizable) space.

Recall that $\mathcal M_1(\mathbb R)$ denotes  the set of Borel probability measures on $\mathbb R$ endowed with the weak topology. For every $\alpha \in \mathcal W$ consider the probability measure
\begin{equation}\label{eq:nu}
\nu(\alpha) := \text{Law} \left(\sqrt{1 - \|\alpha\|_2^2 }  \frac{ Z}{ \sqrt{3}}  + \sum_{k =1}^\infty \alpha_k U_k\right)\in \mathcal M_1(\mathbb R),
\end{equation}
where $Z,U_1,U_2,\ldots$ are as in Theorem~\ref{theo:main}.
Note that the series $\sum_{k=1}^\infty \alpha_k U_k$ converges a.s.\ and in $L^2$ as a consequence of the assumption $\|\alpha\|_2\leq 1$.  Also observe that the expectation of every probability measure $\nu(\alpha)$ is $0$, while the variance is $1$.

Let $\mathcal K:= \{\nu(\alpha): \alpha\in \mathcal W\}$ be the set of all probability measures on $\mathbb R$ which can be represented in the form $\nu(\alpha)$ for some sequence $\alpha\in \mathcal W$. We endow $\mathcal K$ with the topology of weak convergence of probability measures inherited from $\mathcal M_1(\mathbb R)$. The next proposition is the first main ingredient in the proof of Theorem~\ref{theo:main}.

\begin{proposition}\label{prop:homeomorphism}
The map $\mathcal W\ni\alpha \mapsto \nu(\alpha)\in \mathcal K$ defines a homeomorphism between the topological spaces  $\mathcal W$ and $\mathcal K$. In particular, $\mathcal K$ is compact in the weak topology.
\end{proposition}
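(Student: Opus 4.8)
The strategy is to verify the three ingredients of a homeomorphism between compact Hausdorff spaces: that $\alpha \mapsto \nu(\alpha)$ is (a) well-defined as a map into $\mathcal{K}$, (b) injective, and (c) continuous. Since $\mathcal W$ is compact and $\mathcal M_1(\R)$ is Hausdorff, a continuous injection from $\mathcal W$ is automatically a homeomorphism onto its image, and compactness of $\mathcal K$ then follows for free. Part (a) is immediate from the discussion preceding the proposition: for $\alpha \in \mathcal W$ the series $\sum_k \alpha_k U_k$ converges in $L^2$ (hence a.s.), so $\nu(\alpha)$ is a genuine probability measure, and $\mathcal K$ is by definition its image.

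For continuity (c), I would work with characteristic functions, since weak convergence of probability measures is equivalent to pointwise convergence of characteristic functions (Lévy's continuity theorem). The characteristic function of $\nu(\alpha)$ factors as
\begin{align*}
\varphi_{\nu(\alpha)}(t) = \exp\!\left(-\tfrac{1}{6}(1-\|\alpha\|_2^2)\,t^2\right)\prod_{k=1}^\infty \sinc(\alpha_k t),
\end{align*}
where $\sinc(x) = \sin(x)/x$. Suppose $\alpha^{(m)} \to \alpha$ coordinatewise in $\mathcal W$. The Gaussian prefactor is the only place where the discontinuity of $\alpha \mapsto \|\alpha\|_2$ could bite, but it cancels: the total variance is always $1$, so $\frac16(1-\|\alpha\|_2^2)t^2 + $ (the "variance contributed by the sum") is constant, and one checks via dominated convergence that $\prod_k \sinc(\alpha_k^{(m)} t) \to \prod_k \sinc(\alpha_k t)$ while the deficit $1 - \|\alpha^{(m)}\|_2^2$ adjusts to compensate in the limit. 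More carefully: write $-\log \varphi_{\nu(\alpha)}(t)$ and use that each $1-\sinc(\alpha_k t) \asymp \alpha_k^2 t^2/6$ for bounded argument, so the infinite product is controlled by $\sum_k \alpha_k^2 \le 1$ uniformly; the tail sums $\sum_{k>K}$ are small uniformly in $m$, the finitely many leading factors converge by coordinatewise convergence, and the Gaussian term is continuous in $\|\alpha\|_2^2$ on any set where no mass "escapes to infinity" — and escape is exactly prevented by the fixed total variance. This bookkeeping, showing that the non-continuity of $\|\alpha\|_2$ is exactly absorbed, is the main obstacle and the one place to be careful.

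For injectivity (b), suppose $\nu(\alpha) = \nu(\beta)$ with $\alpha,\beta \in \mathcal W$. Equating characteristic functions and taking $-\log$, I get an identity of the form
\begin{align*}
\tfrac{1}{6}(1-\|\alpha\|_2^2)t^2 - \sum_{k=1}^\infty \log\sinc(\alpha_k t) = \tfrac{1}{6}(1-\|\beta\|_2^2)t^2 - \sum_{k=1}^\infty \log\sinc(\beta_k t)
\end{align*}
valid for $t$ in a neighbourhood of $0$. The function $-\log\sinc(x)$ is real-analytic near $0$ with Taylor expansion $\sum_{j\ge 1} c_j x^{2j}$ where all $c_j > 0$ (indeed $c_1 = 1/6$); hence both sides are real-analytic in $t$ near $0$ and comparing Taylor coefficients gives, for the $t^2$-coefficient, $\frac16(1-\|\alpha\|_2^2) + \frac16\|\alpha\|_2^2 = \frac16(1-\|\beta\|_2^2)+\frac16\|\beta\|_2^2$, i.e. a tautology $\frac16 = \frac16$ — useless — but for $j \ge 2$ the coefficient equation reads $c_j \sum_k \alpha_k^{2j} = c_j \sum_k \beta_k^{2j}$, so $\sum_k \alpha_k^{2j} = \sum_k \beta_k^{2j}$ for all $j \ge 2$. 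A standard argument (e.g. these are the power sums of the multisets $\{\alpha_k^2\}$ and $\{\beta_k^2\}$, which together with $\ell^2$-summability determine the multiset, or: both sequences lie in $\ell^4$ and agreeing $p_j$ for $j\ge 2$ forces equality of the decreasing rearrangements) yields $\alpha_k = \beta_k$ for every $k$. With (a), (b), (c) in hand, continuity of the bijection from the compact space $\mathcal W$ to the Hausdorff space $\mathcal K \subset \mathcal M_1(\R)$ gives that it is a homeomorphism, and $\mathcal K$, being the continuous image of a compact space, is compact.
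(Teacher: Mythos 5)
Your overall architecture is sound: a continuous injection from the compact space $\mathcal W$ into the Hausdorff space $\mathcal M_1(\mathbb R)$ is automatically a homeomorphism onto its image, which also yields compactness of $\mathcal K$; this is equivalent to (and slightly slicker than) the paper's two-sided continuity argument, whose converse direction is proved by exactly the same compactness-plus-injectivity mechanism via subsequences. Your injectivity argument is also a genuinely different and valid route: instead of the paper's successive recovery of $\alpha_1,\alpha_2,\ldots$ from the smallest positive zeros of the analytically continued characteristic function, you expand $-\log\sinc(x)=\sum_{j\ge 1}c_jx^{2j}$ with $c_j=\zeta(2j)/(j\pi^{2j})>0$ and match Taylor coefficients, getting $\sum_k\alpha_k^{2j}=\sum_k\beta_k^{2j}$ for all $j\ge 2$; since for a nonnegative $\ell^2$ sequence one has $\big(\sum_k\alpha_k^{2j}\big)^{1/j}\to\alpha_1^2$ and the multiplicity of the top value is recovered from $\sum_k\alpha_k^{2j}/\alpha_1^{2j}$, the power sums with $j\ge2$ do determine the decreasing rearrangement, so this works even though the $j=1$ coefficient is a tautology.

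The genuine gap is in the continuity step, which you yourself flag as the main obstacle but then sketch with claims that are false precisely in the delicate regime. Take the paper's own example $\alpha^{(m)}=(1/\sqrt m,\ldots,1/\sqrt m,0,0,\ldots)\to\alpha=0$ coordinatewise: then $\prod_k\sinc(\alpha^{(m)}_kt)=\sinc(t/\sqrt m)^m\to \eee^{-t^2/6}\neq 1=\prod_k\sinc(\alpha_kt)$, so the assertion that dominated convergence gives $\prod_k\sinc(\alpha^{(m)}_kt)\to\prod_k\sinc(\alpha_kt)$ is wrong; moreover $\|\alpha^{(m)}\|_2=1\not\to 0=\|\alpha\|_2$, so $\ell^2$-mass \emph{does} escape to infinity (the fixed variance of the measure $\nu(\alpha^{(m)})$ does not prevent this), and the tail sums $\sum_{k>K}(\alpha^{(m)}_k)^2$ are not uniformly small in $m$. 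The correct mechanism, which your sketch does not actually implement, is a cancellation: for $L$ large the tail of the product equals $\exp\bigl(-\tfrac{t^2}{6}\bigl[\|\alpha^{(m)}\|_2^2-\sum_{k\le L}(\alpha^{(m)}_k)^2\bigr]+O\bigl(\sum_{k>L}(\alpha^{(m)}_k)^4\bigr)\bigr)$, where the quartic error is bounded by $(\alpha^{(m)}_{L+1})^2$ using monotonicity and $\|\alpha^{(m)}\|_2\le 1$; multiplying by the Gaussian prefactor $\exp\bigl(-\tfrac{t^2}{6}(1-\|\alpha^{(m)}\|_2^2)\bigr)$ makes the non-continuous quantity $\|\alpha^{(m)}\|_2^2$ drop out, leaving $\exp\bigl(-\tfrac{t^2}{6}\bigl[1-\sum_{k\le L}(\alpha^{(m)}_k)^2\bigr]\bigr)$ times finitely many sinc factors, all of which converge by coordinatewise convergence. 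In other words, escaping mass is not prevented; it is converted into Gaussian variance, and the definition of $\nu(\alpha)$ is rigged exactly so that this conversion matches the limit. As written, your verification of continuity would fail on the example above and needs to be replaced by this (or an equivalent) cancellation argument.
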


We prove this proposition in the following two lemmas. First we show that the map $\alpha \mapsto \nu(\alpha)$ is a bijection between $\mathcal W$ and $\mathcal K$.

\begin{lemma}\label{lem:equality nu_alpha nu_beta}
If $\nu(\alpha) = \nu(\beta)$ for some $\alpha=(\alpha_k)_{k\in\N} \in \mathcal W$ and $\beta= (\beta_k)_{k\in\N}\in \mathcal W$, then $\alpha = \beta$.
\end{lemma}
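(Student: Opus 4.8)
The plan is to exploit the fact that the characteristic function of $\nu(\alpha)$ encodes the sequence $\alpha$ uniquely. Writing $\varphi_{U}(t) = \sinc(t) = \frac{\sin t}{t}$ for the characteristic function of a $\Uni[-1,1]$ variable and using independence of $Z, U_1, U_2, \ldots$, the characteristic function of $\nu(\alpha)$ is
\begin{equation*}
\widehat{\nu(\alpha)}(t) = \exp\!\left(-\frac{(1-\|\alpha\|_2^2)\,t^2}{6}\right) \prod_{k=1}^\infty \frac{\sin(\alpha_k t)}{\alpha_k t},
\end{equation*}
with the convention that the $k$-th factor equals $1$ when $\alpha_k = 0$. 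Thus $\nu(\alpha) = \nu(\beta)$ forces equality of these two functions for all $t \in \mathbb R$, and the task is to deduce $\alpha = \beta$ from this analytic identity.

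First I would pass to the logarithm and study the analytic structure near the real axis, or alternatively extend to complex arguments: the function $t \mapsto \frac{\sin(at)}{at}$ is entire of order $1$ with zero set exactly $\{k\pi/a : k \in \mathbb Z \setminus\{0\}\}$ (for $a > 0$), while the Gaussian factor $\exp(-ct^2/6)$ is entire and zero-free. Hence $\widehat{\nu(\alpha)}$, viewed as an entire function of $t$, has zero set equal to $\bigcup_{k:\,\alpha_k>0}\{m\pi/\alpha_k : m \in \mathbb Z\setminus\{0\}\}$, and — crucially — the \emph{smallest positive zero} is $\pi/\alpha_1$ (provided $\alpha_1 > 0$). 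So from $\widehat{\nu(\alpha)} = \widehat{\nu(\beta)}$ one reads off $\alpha_1 = \beta_1$ immediately (and the case $\alpha_1 = 0$, i.e.\ $\alpha = 0$, is handled separately since then $\widehat{\nu(\alpha)}$ is zero-free, a Gaussian). Having matched $\alpha_1 = \beta_1$, I would divide both sides by the common factor $\frac{\sin(\alpha_1 t)}{\alpha_1 t}$ and repeat: the next factor to consider is governed by $\alpha_2$ versus $\beta_2$, and the monotonicity $\alpha_1 \geq \alpha_2 \geq \cdots$ guarantees that an inductive peeling-off argument is well-defined. More carefully, one argues by induction on $j$: assuming $\alpha_i = \beta_i$ for $i < j$, cancel those factors; if $\alpha_j = \beta_j = 0$ we are done, otherwise $\alpha_j = \beta_j$ follows from identifying the smallest positive zero of the remaining quotient (being careful that repeated values among the $\alpha_i$ contribute higher-order zeros, which must be tracked by multiplicity but cause no real difficulty).

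An alternative, perhaps cleaner, route avoids zero-counting: take logarithmic derivatives, or expand $\log\widehat{\nu(\alpha)}(t)$ as a power series in $t$ around $0$. Since $\log\frac{\sin x}{x} = -\sum_{m\geq 1} \frac{\zeta(2m)}{m}\left(\frac{x}{\pi}\right)^{2m}$, one gets
\begin{equation*}
\log\widehat{\nu(\alpha)}(t) = -\frac{(1-\|\alpha\|_2^2)t^2}{6} - \sum_{m=1}^\infty \frac{\zeta(2m)}{m\pi^{2m}} p_m(\alpha)\, t^{2m}, \qquad p_m(\alpha) := \sum_{k=1}^\infty \alpha_k^{2m},
\end{equation*}
so matching Taylor coefficients yields equality of all the power sums $p_m(\alpha) = p_m(\beta)$ for $m \geq 2$, together with the $m=1$ relation which (after using $p_2, p_3, \ldots$, or directly since $\|\alpha\|_2^2 = p_1(\alpha)$ appears) also gives $p_1(\alpha) = p_1(\beta)$. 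Equality of all power sums $\sum_k \alpha_k^{2m}$ of two nonincreasing nonnegative $\ell^2$ sequences forces $\alpha = \beta$ — this is a standard fact (e.g.\ via the Newton/Girard identities applied to finite truncations, or by noting the largest entry is $\lim_{m\to\infty} p_m(\alpha)^{1/(2m)}$ and inducting after subtracting it off), and the square-summability plus monotonicity makes all these series absolutely convergent so the manipulations are legitimate. I would present whichever of these two arguments is shortest to write rigorously; both reduce Lemma~\ref{lem:equality nu_alpha nu_beta} to an elementary moment-matching statement.

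The main obstacle is bookkeeping rather than conceptual: in the zero-set approach one must handle multiplicities carefully when the $\alpha_k$ have repeated values, and in the power-sum approach one must justify interchanging the sum over $k$ with the series expansion of $\log\frac{\sin x}{x}$ and confirm convergence of the resulting double series — both routine given $\|\alpha\|_2 \leq 1$ (which ensures $p_m(\alpha) \leq p_1(\alpha) \leq 1$ for all $m \geq 1$), but needing a clean statement. The degenerate case $\alpha = 0$ (where $\nu(0) = \mathcal N(0,1/3)$) should be checked first so that the inductive/limiting arguments operate on genuinely nonzero leading entries.
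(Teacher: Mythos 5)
Your first argument is essentially identical to the paper's proof: compute the characteristic function $\varphi(t;\alpha)=\eee^{-(1-\|\alpha\|_2^2)t^2/6}\prod_k \sinc(\alpha_k t)$, extend it to an entire function, read off $\alpha_1=\beta_1$ from the smallest positive zero $\pi/\alpha_1$, then divide out the matched factor and induct; this is correct, and your extra attention to the degenerate case $\alpha=0$ and to multiplicities of repeated $\alpha_k$'s only tightens what the paper states more briefly. Your alternative power-sum route is also sound, with one small correction: since $\zeta(2)=\pi^2/6$, the $t^2$ coefficient of $\log\varphi(t;\alpha)$ equals $-1/6$ for every $\alpha$ (all the measures $\nu(\alpha)$ have variance $1/3$), so the $m=1$ relation is vacuous and $p_1(\alpha)=p_1(\beta)$ is not obtained ``directly''; rather, the power sums $p_m$ for $m\geq 2$ already determine the nonincreasing sequence (largest entry via $\lim_m p_m^{1/(2m)}$, then peel off with multiplicity), after which $p_1(\alpha)=p_1(\beta)$ follows trivially.
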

\begin{proof}
We use characteristic functions. Since the series $\sum_{k=1} \alpha_k U_k$ converges a.s., L\'evy's continuity theorem implies that the characteristic function of $\nu(\alpha)$ is given by
\[
\varphi(t; \alpha) := \mathbb E  \Bigg[\eee^{\ii t \left(\sqrt{1 - \|\alpha\|_2^2 }  \frac{ Z}{ \sqrt{3}}  + \sum_{k =1}^\infty \alpha_k U_k\right)}\Bigg]
=
\eee^{-(1-\|\alpha\|_2^2)t^2/6}\prod_{k=1}^\infty \frac{\sin (\alpha_k t)}{\alpha_k t}
=
\eee^{-(1-\|\alpha\|_2^2)t^2/6}\prod_{k=1}^\infty \sinc (\alpha_k t),
\]
with the standard notation $\sinc(x) := \frac{\sin x}{ x}$. The map $x\mapsto \log \sinc x$ (together with $\log \sinc 0:=0$) defines an analytic function of the complex argument $x$ in the disc $\{x\in \mathbb C: |x|<\pi\}$ because $\sinc$ does not vanish in that disc. While the Taylor series for $\log \sinc$ could be expressed through Riemann's Zeta function, we shall only need the first term, where we have
\[
\log \sinc(x) = -\frac{x^2}{6} + O(x^4).
\]
as $x\to 0$. Combining this with the square summability of $\alpha$, implies that the product on the right-hand side converges uniformly on compact sets of $\mathbb C$ and defines an analytic function of a complex variable $t\in \mathbb C$.
Similar conclusions hold for the characteristic function of $\nu(\beta)$, denoted by $\varphi(t;\beta)$. Assume now that $\varphi(t;\alpha)= \varphi(t;\beta)$ for all $t\in\mathbb R$ and hence also all $t\in \mathbb C$. Since the smallest positive real zero of $\varphi(t;\alpha)$, respectively $\varphi(t;\beta)$,  is given by $t=\pi/\alpha_1$, respectively $t= \pi/\beta_1$, we conclude that $\alpha_1= \beta_1$. Dividing the analytic functions $\varphi(t;\alpha)$ and $\varphi(t;\beta)$ by $\sin(\alpha_1t)/(\alpha_1 t)$ and $\sin (\beta_1 t)/(\beta_1 t)$, respectively, and comparing the smallest positive zeros of the resulting analytic functions, we conclude that $\alpha_2=\beta_2$. Continuing to argue in the same way yields that $\alpha= \beta$.
\end{proof}

\begin{lemma}\label{lem:continuity}
Let  $\alpha, \alpha^{(1)},\alpha^{(2)},\ldots\in \mathcal W$. Then, $\alpha^{(n)} \to \alpha$ in $\mathcal W$ if and only if $\nu(\alpha^{(n)}) \to \nu(\alpha)$ weakly.
\end{lemma}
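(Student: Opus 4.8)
The plan is to prove both implications via characteristic functions, exploiting the explicit product formula $\varphi(t;\alpha) = \eee^{-(1-\|\alpha\|_2^2)t^2/6}\prod_{k=1}^\infty \sinc(\alpha_k t)$ derived in the previous lemma, together with L\'evy's continuity theorem, which tells us that weak convergence $\nu(\alpha^{(n)}) \to \nu(\alpha)$ is equivalent to pointwise convergence $\varphi(t;\alpha^{(n)}) \to \varphi(t;\alpha)$ for every $t \in \R$.

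\medskip

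\textbf{The forward direction.} Suppose $\alpha^{(n)} \to \alpha$ in $\mathcal W$, i.e.\ coordinatewise. I would fix $t \in \R$ and show $\varphi(t;\alpha^{(n)}) \to \varphi(t;\alpha)$. The factor $\prod_k \sinc(\alpha_k^{(n)} t)$ requires care because the map $\alpha \mapsto \|\alpha\|_2$ is only lower-semicontinuous, not continuous. The key observation is that $\log \sinc(x) = -x^2/6 + O(x^4)$, so one can write $\log \prod_k \sinc(\alpha_k^{(n)} t) = -\tfrac{t^2}{6}\|\alpha^{(n)}\|_2^2 + \sum_k O((\alpha_k^{(n)} t)^4)$; adding the Gaussian exponent $-(1-\|\alpha^{(n)}\|_2^2)t^2/6$ exactly cancels the troublesome $\|\alpha^{(n)}\|_2^2$ term, leaving $\log \varphi(t;\alpha^{(n)}) = -t^2/6 + \sum_k \rho(\alpha_k^{(n)} t)$ where $\rho(x) := \log\sinc(x) + x^2/6 = O(x^4)$. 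Since $\alpha_1^{(n)} \to \alpha_1 \le 1$ and the sequences are bounded (so $|\alpha_k^{(n)} t| \le |t|$), one has a summable domination $|\rho(\alpha_k^{(n)} t)| \le C |t|^4 (\alpha_k^{(n)})^4 \le C|t|^4 (\alpha_k^{(n)})^2 \le C|t|^4 \cdot \tfrac{1}{k}$ (using monotonicity, $k(\alpha_k^{(n)})^2 \le \|\alpha^{(n)}\|_2^2 \le 1$); hence by dominated convergence for series and the coordinatewise convergence $\alpha_k^{(n)} \to \alpha_k$, the sum $\sum_k \rho(\alpha_k^{(n)} t)$ converges to $\sum_k \rho(\alpha_k t)$. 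This gives $\varphi(t;\alpha^{(n)}) \to \varphi(t;\alpha)$, hence weak convergence.

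\medskip

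\textbf{The reverse direction.} Suppose $\nu(\alpha^{(n)}) \to \nu(\alpha)$ weakly. By compactness of $\mathcal W$ (Tikhonov plus Fatou, as noted in the text), every subsequence of $(\alpha^{(n)})$ has a further subsequence converging coordinatewise to some $\beta \in \mathcal W$. By the forward direction just proved, along that sub-subsequence $\nu(\alpha^{(n)}) \to \nu(\beta)$; but also $\nu(\alpha^{(n)}) \to \nu(\alpha)$, so $\nu(\beta) = \nu(\alpha)$, and Lemma~\ref{lem:equality nu_alpha nu_beta} forces $\beta = \alpha$. Thus every subsequence has a further subsequence converging to $\alpha$, which means $\alpha^{(n)} \to \alpha$ in $\mathcal W$. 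This subsequence-extraction argument is clean precisely because $\mathcal W$ is compact and the limit point is uniquely pinned down by injectivity of $\alpha \mapsto \nu(\alpha)$.

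\medskip

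\textbf{Main obstacle.} The only genuine subtlety is the forward direction, specifically handling the discontinuity of $\alpha \mapsto \|\alpha\|_2$: one cannot naively pass to the limit factor-by-factor in the infinite product without controlling the mass that can "escape to infinity" in the coordinates. The trick that makes it work is the algebraic cancellation between the Gaussian variance $1 - \|\alpha\|_2^2$ and the $-\|\alpha\|_2^2/6$ coming from the leading terms of $\log\sinc$, together with the uniform bound $k(\alpha_k^{(n)})^2 \le 1$ from monotonicity, which supplies the summable domination needed for the error series. Everything else is routine analysis plus invocations of L\'evy's continuity theorem, Tikhonov's theorem, and Lemma~\ref{lem:equality nu_alpha nu_beta}.
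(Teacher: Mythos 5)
Your overall strategy coincides with the paper's: both directions go through the explicit characteristic function, the forward direction exploits the cancellation of $\|\alpha^{(n)}\|_2^2$ between the Gaussian exponent and the quadratic term of $\log\sinc$, the error terms are controlled via the monotonicity of the coordinates, and your reverse direction (compactness of $\mathcal W$ plus injectivity from Lemma~\ref{lem:equality nu_alpha nu_beta}) is literally the paper's argument. However, your forward direction has a genuine gap: you take the logarithm of the \emph{entire} product and use a bound $|\rho(\alpha_k^{(n)}t)|\le C|t|^4(\alpha_k^{(n)})^4$ for all $k$. This is only legitimate when every argument $\alpha_k^{(n)}t$ stays in a fixed disc of radius strictly less than $\pi$. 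For a fixed $t$ with $|t|\ge\pi$ the leading factors can have $\alpha_k^{(n)}|t|\ge\pi$, where $\sinc$ vanishes or is negative, so $\log\varphi(t;\alpha^{(n)})$ need not even be defined over the reals; moreover $\rho(x)=\log\sinc(x)+x^2/6$ tends to $-\infty$ as $x\to\pi^-$, so no bound $|\rho(x)|\le Cx^4$ holds uniformly on $|x|\le|t|$ once $|t|\ge\pi$. Since L\'evy's continuity theorem requires pointwise convergence for all $t\in\R$ (convergence only on $(-\pi,\pi)$ does not identify the limit law), you cannot simply restrict $t$. The repair is exactly the paper's device: because $\alpha_k^{(n)}\le k^{-1/2}$, only a bounded number of indices $k\le L_0(t)$ (uniformly in $n$, after $n\ge n_0(t)$) can have $\alpha_k^{(n)}|t|\ge\tfrac{9}{10}\pi$; split that finite head off and handle it by plain continuity of $\sinc$ (no logarithms), applying your $\rho$-expansion and dominated convergence only to the tail $k>L_0(t)$, where the $O(x^4)$ bound is uniform.

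A second, smaller slip: your displayed domination chain ends with the bound $(\alpha_k^{(n)})^2\le 1/k$, which is not summable, so as written it does not justify ``dominated convergence for series.'' The bound you actually need follows by squaring the same monotonicity estimate: $(\alpha_k^{(n)})^4\le\big((\alpha_k^{(n)})^2\big)^2\le 1/k^2$, which is summable and uniform in $n$. With that correction and the head/tail split above, your argument closes, and it is then essentially the paper's proof; the paper organizes the tail estimate as a bound that is made small by first letting $n\to\infty$ and then $L\to\infty$, rather than by dominated convergence, but the content is the same.
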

\begin{proof}
Assume that $\alpha^{(n)}\to \alpha$ in $\mathcal W$, i.e., we assume the coordinatewise convergence $\lim_{n\to\infty}\alpha^{(n)}_k = \alpha_k$ for every $k\in\N$. As before, we use a characteristic function approach. As in the proof of Lemma \ref{lem:equality nu_alpha nu_beta}, the characteristic function of the probability measure $\nu(\alpha)$ is given by
\[
\varphi(t)
:=
\eee^{-(1-\|\alpha\|_2^2)t^2/6}\prod_{k=1}^\infty \frac{\sin (\alpha_k t)}{\alpha_k t}
=
\eee^{-(1-\|\alpha\|_2^2)t^2/6}\prod_{k=1}^\infty \sinc (\alpha_k t) ,\qquad t\in\R.
\]
Accordingly, the characteristic function of $\alpha^{(n)}$ is
\[
\varphi_n(t)
:=
\eee^{-(1-\|\alpha^{(n)}\|_2^2)t^2/6}\prod_{k=1}^\infty \frac{\sin (\alpha^{(n)}_k t)}{\alpha^{(n)}_k t},\qquad t\in\R.
\]
Fix some  $t\in\R$. Our aim is to show that $\lim_{n\to\infty} \varphi_n(t) = \varphi(t)$.  For any fixed $L\in\N$, we clearly have
\[
\lim_{n\to\infty} \prod_{k=1}^L \sinc (\alpha^{(n)}_k t)  = \prod_{k=1}^L \sinc (\alpha_k t),
\]
because of the coordinatewise convergence.  We shall now analyze the remaining part of the product. Since we would like to take logarithms, the following arguments assume that $L\geq L_0(t)$ and $n\geq n_0(t)$ are sufficiently large to ensure that the remaining terms of the product are not $0$.  More precisely, we choose $L_0(t)\in\N$ such that $\alpha_k |t| < \frac{9}{10}\pi$ for $k= L_0(t)+1$ and hence for all $k\geq L_0(t)+1$. Moreover, because of the coordinate convergence we find $n_0(t)\in\N$ such that $\alpha_k^{(n)}|t|< \frac {9}{10}\pi$ for all $n\geq n_0(t)$ and $k\geq L_0(t)+1$.
Then, the logarithm
\begin{equation}\label{eq:finite_prodicts_conv}
\log \prod_{k=L+1}^\infty \sinc (\alpha^{(n)}_k t) = \sum_{k=L+1}^\infty \log \sinc(\alpha_k^{(n)}t)
\end{equation}
is well defined. The map $x\mapsto \log \sinc x$ (together with $\log \sinc 0:=0$) defines an analytic function of the complex argument $x$ in the disc $\{x\in \mathbb C: |x|<\pi\}$ because $\sinc$ does not vanish in that disc. As in the proof of Lemma \ref{lem:equality nu_alpha nu_beta}, we shall only need the first term of the Taylor expansion, where we have
\[
\log \sinc(x) = -\frac{x^2}{6} + O(x^4).
\]
We shall apply this expansion on the disc of radius $\frac{9}{10}\pi$ around the origin, where the $O$-term is uniform. Therefore, we see that for all $L\geq L_0(t)$ and $n\geq n_0(t)$,
\begin{align*}
\sum_{k=L+1}^\infty \log \sinc(\alpha_k^{(n)}t)
& =
\sum_{k=L+1}^\infty \left(-\frac{(\alpha_k^{(n)})^2t^2}{6} + O\big((\alpha_k^{(n)})^4t^4\big)\right)\cr
&=
-\frac{t^2}{6}\Big[\|\alpha^{(n)}\|_2^2 - \sum_{k=1}^L(\alpha_k^{(n)})^2\Big] + O\Big(\sum_{k=L+1}^\infty (\alpha_k^{(n)})^4\Big).
\end{align*}
Here and everywhere, the constant implicit in the $O$-term is uniform in $L\geq L_0(t)$ and $n\geq n_0(t)$. This implies that
\begin{align*}
\log \Bigg(\eee^{-(1-\|\alpha^{(n)}\|_2^2)t^2/6}\prod_{k=L+1}^\infty \sinc(\alpha^{(n)}_k t) \Bigg) & =  -\frac{t^2}{6}\Big[1-\|\alpha^{(n)}\|_2^2\Big] -\frac{t^2}{6}\Big[\|\alpha^{(n)}\|_2^2 - \sum_{k=1}^L(\alpha_k^{(n)})^2\Big] + O\Big(\sum_{k=L+1}^\infty (\alpha_k^{(n)})^4\Big) \\
& = -\frac{t^2}{6}\Big[1-\sum_{k=1}^L(\alpha_k^{(n)})^2\Big] + O\Big(\sum_{k=L+1}^\infty (\alpha_k^{(n)})^4\Big).
\end{align*}
The same computations, with $\alpha^{(n)}$ replaced by $\alpha$, yield
\[
\log \Bigg(\eee^{-(1-\|\alpha\|_2^2)t^2/6}\prod_{k=L+1}^\infty \sinc(\alpha_k t)\Bigg) = -\frac{t^2}{6}\Big[1-\sum_{k=1}^L\alpha_k^2\Big] + O\Big(\sum_{k=L+1}^\infty \alpha_k^4\Big).
\]
Using the previous observations, for any choice of $L\geq L_0(t)$ and $n\geq n_0(t)$, we obtain
\begin{multline*}
\log \frac{\eee^{-(1-\|\alpha^{(n)}\|_2^2)t^2/6}\prod_{k=L+1}^\infty \sinc(\alpha^{(n)}_k t)}{ \eee^{-(1-\|\alpha\|_2^2)t^2/6}\prod_{k=L+1}^\infty \sinc(\alpha_k t)}\\
 =
\frac{t^2}{6}\Big[1-\sum_{k=1}^L\alpha_k^2\Big] -\frac{t^2}{6}\Big[1-\sum_{k=1}^L(\alpha_k^{(n)})^2\Big] + O\Big(\sum_{k=L+1}^\infty \alpha_k^4\Big) +O\Big(\sum_{k=L+1}^\infty (\alpha_k^{(n)})^4\Big).
\end{multline*}
Now, it follows from the coordinatewise convergence that
\[
\lim_{n\to\infty} \Bigg(\frac{t^2}{6}\Big[1-\sum_{k=1}^L\alpha_k^2\Big] -\frac{t^2}{6}\Big[1-\sum_{k=1}^L(\alpha_k^{(n)})^2\Big]\Bigg) = 0.
\]
Since $\|\cdot\|_2$ dominates $\|\cdot\|_4$ and because $\alpha\in\ell_2$ is square-summable, we have
\[
\sum_{k=L+1}^\infty \alpha_k^4 \leq \Big(\sum_{k=L+1}^\infty \alpha_k^2\Big)^2 \stackrel{L\to\infty}{\longrightarrow} 0.
\]
On the other hand, since the coordinates of  $\alpha^{(n)}$ are non-increasing and $\|\alpha^{(n)}\|_2\leq 1$, if $L\geq L_0(t)$ is sufficiently large, then
\[
\sum_{k=L+1}^\infty (\alpha_k^{(n)})^4 \leq \sum_{k=L+1}^\infty (\alpha_{L+1}^{(n)})^2(\alpha_k^{(n)})^2 = (\alpha_{L+1}^{(n)})^2 \sum_{k=L+1}^\infty (\alpha_k^{(n)})^2 \leq (\alpha_{L+1}^{(n)})^2.
\]
Therefore, we obtain
\[
\lim_{L\to\infty} \limsup_{n\to\infty} \sum_{k=L+1}^\infty (\alpha_k^{(n)})^4
\leq
\lim_{L\to\infty} \limsup_{n\to\infty}(\alpha_{L+1}^{(n)})^2
=
\lim_{L\to\infty} \limsup_{n\to\infty}(\alpha_{L+1})^2
=
0.
\]
Taking everything together, we arrive at the following claim: for every $\eps\in(0,\infty)$ there exists a sufficiently large $L(\eps)\in \N$ such that
$$
-\eps
\leq
\liminf_{n\to\infty} \log  \frac{\eee^{-(1-\|\alpha^{(n)}\|_2^2)t^2/6}\prod_{k=L+1}^\infty \sinc(\alpha^{(n)}_k t)}
{\eee^{-(1-\|\alpha\|_2^2)t^2/6}\prod_{k=L+1}^\infty \sinc(\alpha_k t)}
\leq
\limsup_{n\to\infty} \log  \frac{\eee^{-(1-\|\alpha^{(n)}\|_2^2)t^2/6}\prod_{k=L+1}^\infty \sinc(\alpha^{(n)}_k t)}
{\eee^{-(1-\|\alpha\|_2^2)t^2/6}\prod_{k=L+1}^\infty \sinc(\alpha_k t)}
\leq
\eps.
$$
Exponentiating and taking~\eqref{eq:finite_prodicts_conv} into account, we arrive at
$$
\eee^{-\eps}
\leq
\liminf_{n\to\infty}  \frac{\eee^{-(1-\|\alpha^{(n)}\|_2^2)t^2/6}\prod_{k=1}^\infty \sinc(\alpha^{(n)}_k t)}
{\eee^{-(1-\|\alpha\|_2^2)t^2/6}\prod_{k=1}^\infty \sinc(\alpha_k t)}
\leq
\limsup_{n\to\infty}  \frac{\eee^{-(1-\|\alpha^{(n)}\|_2^2)t^2/6}\prod_{k=1}^\infty \sinc(\alpha^{(n)}_k t)}
{\eee^{-(1-\|\alpha\|_2^2)t^2/6}\prod_{k=1}^\infty \sinc(\alpha_k t)}
\leq
\eee^{\eps}.
$$
Since $\eps\in(0,\infty)$ is arbitrary, both limits are, in fact, equal to $1$. This shows that $\lim_{n\to\infty}  \varphi_n(t) = \varphi(t)$ for all $t\in\R$ and completes the proof that $\nu(\alpha^{(n)}) \to \nu(\alpha)$ weakly as $n\to\infty$.

To prove the converse direction of the lemma, assume that $\alpha^{(n)}\in \mathcal W$ and $\alpha\in \mathcal W$ are such that $\nu(\alpha^{(n)}) \to \nu(\alpha)$ weakly as $n\to\infty$. Assume, by contraposition, that $\alpha^{(n)}$ does not converge to $\alpha$ in $\mathcal W$. Then, by compactness of $\mathcal W$ we can extract a subsequence of $\alpha^{(n_k)}$ converging to $\beta\neq \alpha$ as $k\to\infty$. Applying the first part of the lemma, we deduce that $\nu(\alpha^{(n_k)})$ converges to $\nu(\beta)$ weakly as $k\to\infty$. On the other hand, the same sequence converges weakly to $\nu(\alpha)$. Hence, $\nu(\alpha)=\nu(\beta)$. By Lemma~\ref{lem:equality nu_alpha nu_beta} we must have $\alpha=\beta$, which is a contradiction.
\end{proof}

We can now present the proof of Proposition~\ref{prop:homeomorphism}, establishing the homeomorphism between the topological spaces  $\mathcal W$ and $\mathcal K$.

\begin{proof}[Proof of Proposition~\ref{prop:homeomorphism}]
Lemma~\ref{lem:equality nu_alpha nu_beta} implies that the map $\alpha\mapsto \nu(\alpha)$ is a bijection between $\mathcal W$ and $\mathcal K$. Lemma~\ref{lem:continuity} implies that both, this map and its inverse, are continuous. In particular, the compactness of $\mathcal W$ and the continuity imply the compactness of $\mathcal K$ in the weak topology.
\end{proof}

\subsection{Large deviations for the order statistics} \label{subsec:LDP_order}

Let  $\Theta^{(n)} =(\Theta_1^{(n)},\ldots,\Theta_n^{(n)})$ be uniformly distributed on the unit sphere $\mathbb S^{n-1}$. We denote by $\Theta_{1:n}\geq  \ldots \geq  \Theta_{n:n}\geq 0$ the decreasing order statistics of the sample $|\Theta^{(n)}_1|,\ldots, |\Theta^{(n)}_n|$ and put $\Theta_{k:n}:=0$ for $k>n$. Finally, define
$$
\eta_n:= \left(\Theta_{1:n}, \ldots, \Theta_{n:n}, 0,0,\ldots \right),
$$
which is a random element of the space $\mathcal W$.
Thus, $\eta_n$ is the vector of the decreasing order statistics of the absolute values of the coordinates of a random vector that is uniformly distributed on $\mathbb S^{n-1}$. Recall the definition of the map $\nu: \mathcal W \to \mathcal K \subset \mathcal M_1(\R)$ from~\eqref{eq:nu}. Observe that $\nu(\eta_n) = \mu_{\Theta^{(n)}}$ is the random probability measure we are interested in Theorem~\ref{theo:main}. Since $\nu$ is a homeomorphism between $\mathcal W$ and $\mathcal K$ by Proposition~\ref{prop:homeomorphism}, the proof of Theorem~\ref{theo:main} is a consequence of the following large deviation principle for $(\eta_n)_{n\in\N}$.


\begin{proposition}\label{prop:ldp on W}
The sequence $(\eta_n)_{n\in \mathbb N}$ satisfies a large deviation principle on the compact space $\mathcal W$ equipped the topology of coordinatewise convergence at speed $n$ and with good rate function
\[
\mathcal J(\alpha) = -\frac 12 \log \big(1-\|\alpha\|_2^2\big)\in [0,+\infty], \qquad \alpha \in \mathcal W.
\]
\end{proposition}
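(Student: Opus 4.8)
The plan is to deduce the statement from Propositions~\ref{prop:basis topology} and~\ref{prop:equivalence weak and full LDP}. Since $\mathcal W$ is compact, the sequence $(\eta_n)$ is trivially exponentially tight (take $K_C=\mathcal W$ for every $C$), so by Proposition~\ref{prop:equivalence weak and full LDP} it is enough to prove a \emph{weak} LDP with rate function $\mathcal J(\alpha)=-\frac12\log(1-\|\alpha\|_2^2)$, whence goodness comes for free; note that $\mathcal J$ is lower semicontinuous on $\mathcal W$ because $\alpha\mapsto\|\alpha\|_2^2$ is. By Proposition~\ref{prop:basis topology} it suffices to verify the criterion there for the base of the product topology on $\mathcal W$ consisting of the cylinders
\[
V_{m,\eps}(\alpha):=\bigl\{\beta\in\mathcal W:\ |\beta_k-\alpha_k|<\eps \text{ for } k=1,\dots,m\bigr\},\qquad \alpha\in\mathcal W,\ m\in\N,\ \eps>0,
\]
which form a neighbourhood base at each point. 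Unwinding the two equalities of Proposition~\ref{prop:basis topology} and using $\liminf\le\limsup$, one checks that everything follows from two estimates, valid for each $\alpha\in\mathcal W$: the \emph{upper bound}
\[
\inf_{m\in\N,\,\eps>0}\ \limsup_{n\to\infty}\ \tfrac1n\log\Pro\bigl[\eta_n\in V_{m,\eps}(\alpha)\bigr]\ \le\ \tfrac12\log\bigl(1-\|\alpha\|_2^2\bigr),
\]
and the \emph{lower bound}: for every fixed $m\in\N$ and $\eps>0$,
\[
\liminf_{n\to\infty}\ \tfrac1n\log\Pro\bigl[\eta_n\in V_{m,\eps}(\alpha)\bigr]\ \ge\ \tfrac12\log\bigl(1-\|\alpha\|_2^2\bigr),
\]
with the conventions $\log 0=-\infty$ and $-\tfrac12\log 0=+\infty$ (the latter taking care of the case $\|\alpha\|_2=1$).

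For the upper bound I would use that $\{\eta_n\in V_{m,\eps}(\alpha)\}$ forces $\Theta_{k:n}>\alpha_k-\eps$ for all $k\le m$, which happens if and only if there are distinct indices $i_1,\dots,i_m\in\{1,\dots,n\}$ with $|\Theta_{i_j}^{(n)}|>\alpha_j-\eps$ for every $j$. A union bound over the $n(n-1)\cdots(n-m+1)=e^{o(n)}$ such index tuples, together with exchangeability, reduces the task to estimating $\Pro[\,|\Theta_1^{(n)}|>\alpha_1-\eps,\dots,|\Theta_m^{(n)}|>\alpha_m-\eps\,]$, which I would compute from the explicit density of $(\Theta_1^{(n)},\dots,\Theta_m^{(n)})$, proportional to $(1-\sum_{k\le m}\theta_k^2)^{(n-m-2)/2}$ on the open unit ball with a proportionality constant that is $e^{o(n)}$. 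On the relevant region $1-\sum_{k\le m}\theta_k^2\le 1-\sum_{k\le m}\max(\alpha_k-\eps,0)^2$, so Laplace's method gives $\limsup_n\frac1n\log\Pro[\eta_n\in V_{m,\eps}(\alpha)]\le\frac12\log\bigl(1-\sum_{k\le m}\max(\alpha_k-\eps,0)^2\bigr)$; letting $\eps\downarrow0$ and then $m\to\infty$ yields the claim, and also handles $\|\alpha\|_2=1$ (right-hand side $-\infty$).

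For the lower bound, fix $m,\eps$; we may assume $\|\alpha\|_2<1$, and the case $\alpha=0$ is immediate since $\Pro[\eta_n\in V_{m,\eps}(0)]=\Pro[\|\Theta^{(n)}\|_\infty<\eps]\to1$. Otherwise let $q\ge1$ be the number of indices $k\le m$ with $\alpha_k>0$ and choose $\gamma_1>\dots>\gamma_q>0$ with $\gamma_k\le\alpha_k$ and $|\gamma_k-\alpha_k|<\eps/2$ (possible, perturbing any ties downward), so $\sum_{k\le q}\gamma_k^2\le\|\alpha\|_2^2<1$; set $\rho_0:=\tfrac12\min(\gamma_q,\eps)$ and let $\tau>0$ be small. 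I would restrict to the sub-event $E_n$ on which $|\Theta_k^{(n)}|\in(\gamma_k-\tau,\gamma_k+\tau)$ for $k\le q$ and $|\Theta_j^{(n)}|<\rho_0$ for all $j>q$; for $\tau$ small enough (relative to $\eps$, $\rho_0$, and $\min_k(\gamma_k-\gamma_{k+1})$) one checks $E_n\subseteq\{\eta_n\in V_{m,\eps}(\alpha)\}$. To bound $\Pro[E_n]$ from below I would invoke the slicing property of the uniform distribution on the sphere: conditionally on $(\Theta_1^{(n)},\dots,\Theta_q^{(n)})=\theta$ with $\|\theta\|_2^2=s$, the vector $(\Theta_{q+1}^{(n)},\dots,\Theta_n^{(n)})/\sqrt{1-s}$ is uniform on $\mathbb{S}^{n-q-1}$. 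Since a uniform random vector on $\mathbb{S}^{N-1}$ has $\ell_\infty$-norm at least any fixed $\delta>0$ with probability at most $e^{-c(\delta)N}$ (union bound over its $N$ marginals, each with density $\propto(1-x^2)^{(N-3)/2}$), and since $\rho_0/\sqrt{1-s}\ge\rho_0$ on $E_n$, the conditional probability of $\{|\Theta_j^{(n)}|<\rho_0\ \forall j>q\}$ is at least $1-e^{-c(\rho_0)(n-q)}\ge\frac12$ for large $n$, uniformly in $\theta$. Bounding the remaining factor below by the marginal density of $(\Theta_1^{(n)},\dots,\Theta_q^{(n)})$ integrated over the box defining $E_n$ gives $\Pro[E_n]\ge e^{o(n)}\bigl(1-\sum_{k\le q}(\gamma_k+\tau)^2\bigr)^{(n-q-2)/2}$, hence $\liminf_n\frac1n\log\Pro[\eta_n\in V_{m,\eps}(\alpha)]\ge\frac12\log\bigl(1-\sum_{k\le q}(\gamma_k+\tau)^2\bigr)$. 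Letting $\tau\downarrow0$ and $\gamma_k\uparrow\alpha_k$ gives $\ge\frac12\log\bigl(1-\sum_{k\le q}\alpha_k^2\bigr)=\frac12\log\bigl(1-\sum_{k\le m}\alpha_k^2\bigr)\ge\frac12\log(1-\|\alpha\|_2^2)$, as needed.

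Feeding these two estimates into Proposition~\ref{prop:basis topology} yields the weak LDP on $\mathcal W$ with rate function $\mathcal J$, and Proposition~\ref{prop:equivalence weak and full LDP} promotes it to the full LDP with good rate function. The step I expect to be the crux is the lower bound, specifically showing that the constraint ``all tail coordinates are small'' costs only a subexponential factor: a plain union bound over the $n-q$ tail coordinates is too lossy when $\|\alpha\|_2$ is close to $1$, and it is the slicing property that rescues the argument, since it exhibits the tail as a genuine lower-dimensional uniform-on-sphere whose $\ell_\infty$-norm concentrates near $0$ at an exponential rate dominating the polynomial number of coordinates. The remaining work --- the bookkeeping around ties and zeros among $\alpha_1,\dots,\alpha_m$, the choice of $\gamma$ and $\tau$, the uniformity of the conditional estimate, and checking that all stray prefactors (ratios of Gamma functions, box volumes) are $e^{o(n)}$ --- is routine.
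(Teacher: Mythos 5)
Your proposal is correct and follows essentially the same route as the paper's proof: a weak LDP via the base-of-topology criterion (Proposition~\ref{prop:basis topology}) using the explicit marginal density of sphere coordinates (Lemma~\ref{lem:density}), the conditional uniformity of the remaining coordinates on a lower-dimensional sphere to control the tail order statistics, Laplace-type estimates with $\eee^{o(n)}$ prefactors, and compactness of $\mathcal W$ for exponential tightness and goodness. The only differences are cosmetic: you use coordinatewise boxes instead of Euclidean balls in the first finitely many coordinates, and an exponential bound on the sup-norm of the sliced sphere where the paper only needs convergence in probability.
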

Closely related results can be found in~\cite[Theorems~3.4 and~3.7]{barthe_etal}.
Before proving Proposition \ref{prop:ldp on W} we start with a lemma.


\begin{lemma}\label{lem:density}
For $\ell \leq n-1$, the density of the first $\ell$ coordinates $(\Theta_1^{(n)},\ldots,\Theta_\ell^{(n)})$ of a random vector $\Theta^{(n)} =(\Theta_1^{(n)},\ldots,\Theta_n^{(n)})$ uniformly distributed on the unit sphere $\mathbb S^{n-1}$ is given by
\begin{align*}
f(s_1,\ldots,s_\ell)
:=
\frac{\Gamma(\frac{n}{2}) }{\pi^{\ell/2}  \Gamma\left( \frac{n- \ell}{2} \right)} \left( 1 - s_1^2 - \ldots - s_\ell^2\right)^{ \frac{n-\ell-2}{2}  } ,\qquad
s_1^2+\ldots+s_\ell^2 \leq 1.
\end{align*}
\end{lemma}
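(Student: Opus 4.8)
The plan is to use the Gaussian model of the uniform measure on the sphere. Let $G=(G_1,\dots,G_n)$ be a vector of i.i.d.\ standard Gaussian random variables; it is classical that $\Theta^{(n)}\stackrel{d}{=}G/\norm{G}_2$. Split the coordinates as $G'=(G_1,\dots,G_\ell)$ and $G''=(G_{\ell+1},\dots,G_n)$, so that $R_1^2:=\norm{G'}_2^2$ and $R_2^2:=\norm{G''}_2^2$ are independent with $R_1^2\sim\chi^2_\ell$ and $R_2^2\sim\chi^2_{n-\ell}$, and
\[
(\Theta_1^{(n)},\dots,\Theta_\ell^{(n)})\;\stackrel{d}{=}\;\frac{G'}{\sqrt{R_1^2+R_2^2}}.
\]
Writing $G'=R_1\,\Omega$ in polar coordinates, with $\Omega$ uniform on $\SSS^{\ell-1}$ and independent of $(R_1^2,R_2^2)$, and setting $B:=R_1^2/(R_1^2+R_2^2)$, the standard relation between Gamma and Beta laws gives $B\sim\Beta(\ell/2,(n-\ell)/2)$ independently of $\Omega$, whence
\[
(\Theta_1^{(n)},\dots,\Theta_\ell^{(n)})\;\stackrel{d}{=}\;\sqrt{B}\,\Omega.
\]

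It then remains to turn this into a density on $\R^\ell$. The right-hand side is a rotationally symmetric random vector $Y$ in $\R^\ell$ with $\norm{Y}_2^2$ distributed as $B$, so its density at a point $y$ with $\norm{y}_2<1$ equals $f_{\norm{Y}_2}(\norm{y}_2)\big/\big(|\SSS^{\ell-1}|\,\norm{y}_2^{\ell-1}\big)$, where $|\SSS^{\ell-1}|=2\pi^{\ell/2}/\Gamma(\ell/2)$. Substituting $f_{\norm{Y}_2}(r)=2r\,f_B(r^2)$ with $f_B(u)=\tfrac{\Gamma(n/2)}{\Gamma(\ell/2)\Gamma((n-\ell)/2)}u^{\ell/2-1}(1-u)^{(n-\ell)/2-1}$, all powers of $\norm{y}_2$ cancel and one is left with exactly
\[
\frac{\Gamma(\tfrac n2)}{\pi^{\ell/2}\,\Gamma(\tfrac{n-\ell}{2})}\,\big(1-\norm{y}_2^2\big)^{(n-\ell-2)/2},
\]
which is the asserted formula. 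Note that the hypothesis $\ell\le n-1$ is precisely what makes the exponent $(n-\ell-2)/2>-1$, so that this is a genuine probability density (unbounded near $\partial\B^\ell$ when $\ell=n-1$).

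The argument is routine; the only steps deserving a word are the independence of $B$ and $\Omega$ (the usual Gamma/Dirichlet splitting) and the bookkeeping of the powers of $\norm{y}_2$ in the last display, where the $\norm{y}_2^{\ell-1}$ from the surface area has to be weighed against the $\norm{y}_2^{\ell/2-1}$ from the density of $B$ and the Jacobian factor $2\norm{y}_2$. If one prefers to avoid Gaussians, an equivalent route is to parametrize the sphere over the ball $\B^{n-1}$ by $s\mapsto(s,\pm\sqrt{1-\norm{s}_2^2})$, read off the density of $(\Theta_1^{(n)},\dots,\Theta_{n-1}^{(n)})$ from the graph area formula $\mathrm{d}\sigma=(1-\norm{s}_2^2)^{-1/2}\,\mathrm{d}s$, and then integrate out the last $n-1-\ell$ coordinates using $\int_{\B^m}(1-\norm{t}_2^2)^{-1/2}\,\mathrm{d}t=\pi^{m/2}\Gamma(\tfrac12)/\Gamma(\tfrac{m+1}{2})$; both computations produce the same constant.
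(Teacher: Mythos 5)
Your argument is correct, and it is more than the paper offers: the paper disposes of this lemma in one line by citing a known result (the density of the projection of the uniform distribution on $\SSS^{n-1}$, obtained from the beta-distribution lemma of the beta-polytopes paper with $\beta=-1$), whereas you give a self-contained derivation. Your main route --- writing $\Theta^{(n)}\stackrel{d}{=}G/\norm{G}_2$ for a standard Gaussian $G$, splitting into $G'$ and $G''$, and identifying $(\Theta_1^{(n)},\dots,\Theta_\ell^{(n)})\stackrel{d}{=}\sqrt{B}\,\Omega$ with $B\sim\Beta(\ell/2,(n-\ell)/2)$ independent of $\Omega\sim\Uni(\SSS^{\ell-1})$ --- is a standard and clean way to get the formula, and your bookkeeping checks out: the density of the rotationally symmetric vector is $2\norm{y}_2 f_B(\norm{y}_2^2)\big/\bigl(|\SSS^{\ell-1}|\,\norm{y}_2^{\ell-1}\bigr)$, the factors $\norm{y}_2^{\ell-1}$ and $\Gamma(\ell/2)$ cancel, and the constant $\Gamma(\tfrac n2)\big/\bigl(\pi^{\ell/2}\Gamma(\tfrac{n-\ell}{2})\bigr)$ comes out exactly. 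Your remark on $\ell\le n-1$ is also the right one: for $\ell=n$ the law is singular, while for $\ell\le n-1$ the exponent $(n-\ell-2)/2\ge-1/2$ keeps the expression integrable. The second route you sketch (graph parametrization of the sphere over $\B^{n-1}$ giving the surface element $(1-\norm{s}_2^2)^{-1/2}\dint s$, then integrating out coordinates via $\int_{\B^m}(1-\norm{t}_2^2)^{-1/2}\dint t=\pi^{m/2}\Gamma(\tfrac12)/\Gamma(\tfrac{m+1}{2})$) is equally valid and closer in spirit to how the cited reference proves the general beta case. In short: the paper buys brevity by citation; your proof buys self-containedness at the cost of a page of routine computation, and either is acceptable.
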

\begin{proof}
This is well-known and follows, for example, from~\cite[Lemma~3.1]{beta_polytopes} with $\beta=-1$.
\end{proof}

\begin{proof}[Proof of Proposition \ref{prop:ldp on W}]
To prove this result, we shall use Proposition \ref{prop:basis topology} and work on a base of the topology of coordinatewise convergence on the compact space $\mathcal W$. The compactness of $\mathcal W$ is particularly sufficient to guarantee a full LDP (rather than the weak LDP coming from the proposition).
For $r\in(0,\infty)$, $\ell\in\N$, and $x \in \R^\ell$  we define an $\ell$-dimensional ball
\[
B_{r,\ell}(x) := \Big\{ z = (z_1,\ldots,z_\ell)\in \R^\ell\,:\, \sum_{i=1}^\ell (z_i - x_i)^2 < r^2 \Big\} \subset \R^\ell.
\]
A base of the topology of $\mathcal W$ is given by the following family of sets:
\[
\mathcal B:= \Big\{W_{r,\ell}(x)\,:\, r\in(0,\infty),\,\ell\in\N,\,x= (x_1,\ldots,x_\ell) \in \R^\ell,x_1\geq \ldots\geq x_\ell \geq 0,x_1^2+\ldots+x_\ell^2 \leq 1 \Big\},
\]
where
$$
W_{r,\ell}(x) = \big\{w\in \mathcal W: (w_1,\ldots,w_\ell)\in B_{r,\ell}(x)\big\}.
$$
\vskip 1mm
\textbf{Lower bound:}
Let us prove that for every $x\in \R^\ell$ with $\|x\|_2<1$ and $x_1\geq \ldots\geq x_\ell\geq 0$ we have
\begin{equation}\label{eq:lower_bound_x}
\liminf_{n\to\infty} \frac{1}{n}\log
\Pro\Big[\eta_n \in W_{r,\ell}(x)\Big] \geq \frac{1}{2}\log\big(1-\|x\|_2^2\big).
\end{equation}
Without loss of generality, we may assume that $x_\ell>0$ because any ball $B_{r,\ell}(x)$ contains a smaller ball $B_{r',\ell}(x')$ with $x'\in\R^\ell$ satisfying this condition. So, let $x_\ell>0$.  Observe that
\begin{align*}
\Pro\Big[\eta_n \in W_{r,\ell}(x)\Big]
& =
\Pro\Bigg[ (\Theta_{1:n}, \ldots, \Theta_{\ell:n})\in B_{r,\ell}(x)\Bigg] \cr
& \geq
\Pro\Bigg[
(\Theta^{(n)}_1, \ldots, \Theta^{(n)}_\ell) \in B_{r,\ell}(x),
\quad
\Theta^{(n)}_1 > \ldots >\Theta^{(n)}_\ell>0,
\quad
\max\left\{|\Theta^{(n)}_{\ell+1}|, \ldots, |\Theta^{(n)}_{n}|\right\} < \Theta^{(n)}_\ell
 \Bigg]\\
& = \int_{B_{r,\ell}^+(x)} f(z_1,\ldots,z_\ell) \,\Pro\Bigg[ \max\left\{|\Theta^{(n)}_{\ell+1}|, \ldots, |\Theta^{(n)}_{n}|\right\} < z_\ell \Big | \Theta^{(n)}_1 = z_1,\ldots, \Theta^{(n)}_\ell = z_\ell \Bigg]  \dint z_1\ldots \dint z_\ell,
\end{align*}
where $f$ is the joint density of $(\Theta^{(n)}_1, \ldots, \Theta^{(n)}_\ell)$ given in Lemma~\ref{lem:density} and
$$
B_{r,\ell}^+(x) := B_{r,\ell}(x) \cap \big\{z\in \mathbb R^\ell: z_1>\ldots > z_\ell >0, z_1^2+\ldots +z_\ell^2 \leq 1\big\}.
$$
Now, the conditional distribution of $\Theta^{(n)}_{\ell+1}, \ldots, \Theta^{(n)}_{n}$ given that $\Theta^{(n)}_1 = z_1,\ldots, \Theta^{(n)}_\ell = z_\ell$ is the uniform distribution on the $(n-\ell-1)$-dimensional sphere of radius $(1-z_1^2-\ldots -z_\ell^2)^{1/2}\leq 1$. Introducing a random vector $Y^{(n-\ell)} = (Y_1^{(n-\ell)},\ldots, Y_{n-\ell}^{(n-\ell)})$ distributed uniformly on the unit sphere $\mathbb S^{n-\ell-1}$, we can continue as follows:

\begin{align*}
\Pro\Big[\eta_n \in W_{r,\ell}(x)\Big]
& \geq
\int_{B_{r,\ell}^+(x)} f(z_1,\ldots,z_\ell) \,\Pro\Bigg[ \sqrt{1-z_1^2-\ldots- z_\ell^2} \max\left\{|Y_1^{(n-\ell)}|, \ldots, |Y_{n-\ell}^{(n-\ell)}|\right\} < z_\ell \Bigg]  \dint z_1\ldots \dint z_\ell\\
& \geq
\int_{B_{1/n,\ell}^+(x)} f(z_1,\ldots,z_\ell) \,\Pro\Big[\max\left\{|Y_1^{(n-\ell)}|, \ldots, |Y_{n-\ell}^{(n-\ell)}|\right\} < x_\ell/2 \Big]  \dint z_1\ldots \dint z_\ell,
\end{align*}
where in the last step we used that, for sufficiently large $n$, we have $1/n \leq r$ and $z_\ell > x_\ell/2>0$ for all $(z_1,\ldots,z_\ell)\in B_{1/n,\ell}^+(x)$. Now, it is well known that for every fixed $x_\ell >0$, we have
$$
\lim_{n\to\infty} \Pro\Big[\max\left\{|Y_1^{(n-\ell)}|, \ldots, |Y_{n-\ell}^{(n-\ell)}|\right\} < x_\ell/2 \Big] = 1.
$$
(To prove this claim, write $Y^{(n)}= G^{(n)}/\|G^{(n)}\|_2$, where $G^{(n)}$ is standard Gaussian on $\mathbb R^{n}$, and recall that $\|G^{(n)}\|_\infty \sim \sqrt{2\log n}$ a.s., as $n\to\infty$,  while $\|G^{(n)}\|_2 \sim \sqrt n$ a.s., implying that $\|Y^{(n)}\|_\infty =\|G^{(n)}\|_\infty/\|G^{(n)}\|_2\to 0$ in probability, as $n\to\infty$).
Hence, for sufficiently large $n$, the above  probability is $\geq 1/2$ and using the form of the density $f$ given in Lemma~\ref{lem:density}, we obtain
$$
\Pro\Big[\eta_n \in W_{r,\ell}(x)\Big] \geq \frac 12 \int_{B_{1/n,\ell}^+(x)} f(z_1,\ldots,z_\ell)\dint z_1\ldots \dint z_\ell
=
\frac{\Gamma(\frac{n}{2})}{2\pi^{\ell/2}  \Gamma\left( \frac{n- \ell}{2} \right)} \int_{B_{1/n,\ell}^+(x)}  \Big(1-\sum_{i=1}^\ell z_i^2\Big)^{\frac{n-\ell}{2}-1} \, \dint z_1\dots\dint z_\ell.
$$
If we denote the factor in front of the integral by $\kappa(n,\ell)$, then  we have $\lim_{n\to\infty} \frac 1n \log \kappa(n,\ell) =0$ because $\Gamma(\frac{n}{2})/\Gamma( \frac{n- \ell}{2}) \sim (\frac n2)^{\ell/2}$.
Next, we will bound the integral from below. Observe that for any $z\in B_{1/n,\ell}^+(x)$ it follows from the triangle inequality that
\[
\|z\|_2 \leq \|x\|_2 + \frac{1}{n}.
\]
Thus,
\begin{align*}
\int_{B_{1/n,\ell}^+(x)} \Big(1-\sum_{i=1}^\ell z_i^2\Big)^{\frac{n-\ell}{2}-1} \, \dint z_1\dots\dint z_\ell
& \geq \int_{B_{1/n,\ell}^+(x)} \Big(1-\Big[\|x\|_2 + \frac{1}{n}\Big]^2\Big)^{\frac{n-\ell}{2}-1} \, \dint z_1\dots\dint z_\ell \cr
& = \int_{B_{1/n,\ell}^+(x)} \Big(1-\|x\|_2^2 -2n^{-1}\|x\|_2-n^{-2}\Big)^{\frac{n-\ell}{2}-1} \, \dint z_1\dots\dint z_\ell\cr
& = \vol_\ell\big(B_{1/n,\ell}^+(x)\big) \big(1-\|x\|_2^2\big)^{\frac{n-\ell}{2}-1}\Bigg(1 - \frac{2n^{-1}\|x\|_2+n^{-2}}{1-\|x\|_2^2}\Bigg)^{\frac{n-\ell}{2}-1}.
\end{align*}
To estimate the volume of the set $B_{1/n,\ell}^+(x)$, we observe that for every point $u= (u_1,\ldots,u_{\ell}) \in \B_{1/n,\ell}(0)$ with  $u_1>\ldots > u_\ell >0$ the shifted point $z:=(u_1+ x_1,\ldots, u_\ell  + x_\ell)$ belongs to $B_{1/n,\ell}^+(x)$, provided $n$ is sufficiently large to ensure that $\|z\|_2 <1$.  Since the Lebesgue measure of the set of such $u$'s is $\vol_\ell(B_{1/n,\ell}(0))/ (2^\ell\ell!)$, we arrive at the estimate
$$
\vol_\ell\big(B_{1/n,\ell}^+(x)\big)
\geq
\frac {\vol_\ell\big(B_{1/n,\ell}(0)\big)} {2^\ell\ell!}
=
\frac {\vol_\ell\big(B_{1,\ell}(0)\big)} {2^\ell\ell! n^{\ell}}.
$$
Combining all elements, and taking the logarithm, we obtain that
\begin{align*}
\frac{1}{n}\log
\Pro\Big[\eta_n \in W_{r,\ell}(x)\Big] & \geq \frac{1}{n} \log \kappa(n,\ell) + \frac{1}{n}\log\frac {\vol_\ell\big(B_{1,\ell}(0)\big)} {2^\ell\ell! n^{\ell}}   + \frac{n-\ell-2}{2n}\log\Bigg(1 - \frac{2n^{-1}\|x\|_2+n^{-2}}{1-\|x\|_2^2}\Bigg) \cr
&\quad + \frac{n-\ell-2}{n}\frac{1}{2}\log\big(1-\|x\|_2^2\big).
\end{align*}
Taking the limit inferior, we conclude that
\begin{equation*}
\liminf_{n\to\infty} \frac{1}{n}\log
\Pro\Big[\eta_n \in W_{r,\ell}(x)\Big] \geq \frac{1}{2}\log\big(1-\|x\|_2^2\big),
\end{equation*}
which completes the proof of~\eqref{eq:lower_bound_x}.

Let us now derive a lower bound from Proposition~\ref{prop:basis topology}. Take some  $w\in \mathcal W$. Our aim is to prove that
\begin{equation}\label{eq:lower_bound_as_in_prop}
\inf_{A\in\mathcal B:\, w\in A} \liminf_{n\to\infty} \frac 1 {n} \log \Pro \left[\eta_n \in A\right] \geq \frac{1}{2}\log\big(1-\|w\|_2^2\big)=: -\mathcal J(w).
\end{equation}
If $\|w\|_2=1$, the right-hand side is $-\infty$ and there is nothing to prove.  So, let in the following $\|w\|_2<1$. Let $A= W_{r,\ell}(x) \in \mathcal B$ be a set from the base of topology such that $w\in A$.  Let $\widetilde w:= (w_1,\ldots,w_\ell)\in \R^\ell$. Then $\widetilde w \in B_{r,\ell}(x)$.  This means that  $\|\widetilde w - x\|<r$ and we can find $\widetilde r>0$ such that $B_{\widetilde r, \ell}(\widetilde w) \subset B_{r,\ell} (x)$. It follows that
$$
\mathbb P[\eta_n\in W_{r,\ell}(x)] \geq  \mathbb P[\eta_n \in W_{\widetilde r,\ell} (\widetilde w)].
$$
It follows from~\eqref{eq:lower_bound_x} with $W_{r,\ell}(x)$ replaced by $W_{\widetilde r,\ell} (\widetilde w)$ that
\[
\liminf_{n\to\infty} \frac{1}{n}\log \Pro\Big[\eta_n \in W_{r,\ell}(x)\Big]
\geq
\liminf_{n\to\infty} \frac{1}{n}\log \Pro\Big[\eta_n \in W_{\widetilde r,\ell} (\widetilde w)\Big]
\geq
\frac{1}{2}\log\big(1-\|\widetilde w\|_2^2\big) \geq \frac{1}{2}\log\big(1-\|w\|_2^2\big),
\]
which proves~\eqref{eq:lower_bound_as_in_prop}.

\vskip 1mm
\textbf{Upper bound:}  Let us prove that for every $x\in \R^\ell$ with $0<r \leq \|x\|_2\leq 1$ and $x_1\geq \ldots\geq x_\ell \geq 0$ we have
\begin{equation}\label{eq:upper_bound_x}
\limsup_{n\to\infty} \frac{1}{n}\log
\Pro\Big[\eta_n \in W_{r,\ell}(x)\Big] \leq \frac{1}{2}\log\big(1- (\|x\|_2-r)^2\big).
\end{equation}
We obtain from a union bound and the exchangeability of the coordinates of $\Theta^{(n)}$ that
\begin{align*}
\Pro\Big[\eta_n \in W_{r,\ell}(x)\Big]
 =
\Pro\Bigg[ (\Theta_{1:n}, \ldots, \Theta_{\ell:n})\in B_{r,\ell}(x)\Bigg]
\leq
2^\ell
{n\choose \ell}
\Pro\Bigg[ \Big(\Theta^{(n)}_1, \ldots, \Theta^{(n)}_\ell\Big) \in B_{r,\ell}(x)\Bigg],
\end{align*}
where the factor of $2^\ell$ comes from the fact that each of the coordinates $\Theta_1^{(n)},\ldots,\Theta_\ell^{(n)}$ may be either positive or negative while having a certain modulus.
As before, by Lemma~\ref{lem:density} we have
\begin{align*}
\Pro\Big[\eta_n \in W_{r,\ell}(x)\Big]
&\leq
2^\ell
{n\choose \ell} \frac{\Gamma(\frac{n}{2}) }{\pi^{\ell/2}  \Gamma\left( \frac{n- \ell}{2} \right)} \int_{B_{r,\ell}(x) \cap B_{1,\ell}(0)} \Big(1-\sum_{i=1}^\ell z_i^2\Big)^{\frac{n-\ell}{2}-1} \, \dint z_1\dots\dint z_\ell\\
& = \gamma(n,\ell) \int_{B_{r,\ell}(x) \cap B_{1,\ell}(0)} \Big(1-\sum_{i=1}^\ell z_i^2\Big)^{\frac{n-\ell}{2}-1} \, \dint z_1\dots\dint z_\ell,
\end{align*}
with $\lim_{n\to\infty} \frac 1n \log \gamma(n,\ell) = 0$, so that $\gamma(n,\ell)$ will vanish on the logarithmic scale.
Next, we observe that, for any $z\in B_{r,\ell}(x)$, the reverse triangle inequality yields
\[
\|z\|_2 \geq \|x\|_2 - r \geq 0.
\]
From this we obtain the upper bound
\begin{align*}
\int_{B_{r,\ell}(x) \cap B_{1,\ell}(0)} \Big(1-\sum_{i=1}^\ell z_i^2\Big)^{\frac{n-\ell}{2}-1} \, \dint z_1\dots\dint z_\ell
& \leq \int_{B_{r,\ell}(x) \cap B_{1,\ell}(0)} \Big(1-\Big(\|x\|_2 - r\Big)^2\Big)^{\frac{n-\ell}{2}-1} \, \dint z_1\dots\dint z_\ell \cr
& = \vol_\ell\big(B_{r,\ell}(x)\big)\cdot\Big(1-\Big(\|x\|_2 - r\Big)^2\Big)^{\frac{n-\ell-2}{2}}.
\end{align*}
Taking everything together, we arrive at
\begin{align*}
\frac{1}{n}\log \Pro\Big[\eta_n \in W_{r,\ell}(x)\Big]
\leq
 \frac{1}{n}\log \gamma(n,\ell) + \frac{1}{n}\log \vol_\ell(B_{r,\ell}(x)) + \frac{n-\ell-2}{n}\frac{1}{2}\log \Big(1-\Big(\|x\|_2 - r\Big)^2\Big).
\end{align*}
Thus, taking the limit superior as $n\to\infty$, we obtain
\[
\limsup_{n\to\infty} \frac{1}{n}\log \Pro\Big[\eta_n \in W_{r,\ell}(x)\Big] \leq \frac{1}{2}\log \big(1-(\|x\|_2 - r)^2\big).
\]

Let us now derive the upper bound from Proposition~\ref{prop:basis topology}. Take some  $w\in \mathcal W$. Our aim is to prove that
\begin{equation}\label{eq:upper_bound_as_in_prop}
\inf_{A\in\mathcal B:\, w\in A} \limsup_{n\to\infty} \frac 1 {n} \log \Pro \left[\eta_n \in A\right] \leq \frac{1}{2}\log\big(1-\|w\|_2^2\big)=: -\mathcal J(w).
\end{equation}
If $w=0$, the right-hand side is $0$ and there is nothing to prove. So, let $w\neq 0$. Take a sufficiently large $\ell\in \N$ to ensure that $\widetilde w:=(w_1,\ldots,w_\ell)$ satisfies $\|\widetilde w\|_2>0$. Then, for $A := W_{r,\ell}(\widetilde w)$ with any positive $r<\|\widetilde w\|_2$ we have
$$
\limsup_{n\to\infty} \frac{1}{n}\log
\Pro\Big[\eta_n \in A \Big] \leq \frac{1}{2}\log\big(1- (\|\widetilde w\|_2-r)^2\big).
$$
Taking first $r\to 0$ and then $\ell\to\infty$, we arrive at the required upper bound~\eqref{eq:upper_bound_as_in_prop}.

\vskip 1mm
\textbf{Completing the proof of Proposition~\ref{prop:ldp on W}.}
Putting lower and upper bounds~\eqref{eq:lower_bound_as_in_prop} and~\eqref{eq:upper_bound_as_in_prop} together, we see that for every $w\in \mathcal W$,
\[
\inf_{A\in\mathcal B:\, w\in A} \limsup_{n\to\infty} \frac{1}{n}\log \Pro\Big[\eta_n \in A\Big]
\leq
-\mathcal J(w)
\leq
\inf_{A\in\mathcal B:\, w\in A}  \liminf_{n\to\infty} \frac{1}{n}\log
\Pro\Big[\eta_n \in A\Big]
\]
and so Proposition \ref{prop:basis topology} yields a weak LDP for the sequence $(\eta_n)_{n\in\N}$ on $\mathcal W$, endowed with the topology of coordinatewise convergence, which occurs at speed $n$ and with rate function $\mathcal J$. Because the space $\mathcal W$ is compact, the exponential tightness of the probability laws of $(\eta_n)_{n\in\N}$ holds trivially. Now, the weak LDP together with exponential tightness implies the full LDP (see, e.g., \cite[Lemma 1.2.18 (a)]{DZ2010}).
\end{proof}

\subsection*{Acknowledgement}
SJ is supported by the EPSRC funded Project EP/S036202/1 \emph{Random fragmentation-coalescence processes out of equilibrium}.
ZK has been supported by the German Research Foundation under Germany’s Excellence Strategy EXC 2044 – 390685587, Mathematics M\"unster: Dynamics - Geometry - Structure and by the DFG priority program SPP 2265 Random Geometric Systems.
JP is supported by the Austrian Science Fund (FWF) Project P32405 \textit{Asymptotic Geometric Analysis and Applications} and by the FWF Project F5513-N26 which  is  a  part  of  the  Special Research  Program  \emph{Quasi-Monte  Carlo  Methods:  Theory  and  Applications}.

\bibliographystyle{plain}
\bibliography{projections}

\bigskip
\bigskip
	
	\medskip
	
	\small
	
	\noindent \textsc{Samuel Johnston:} Department of Mathematical Sciences, University of Bath, Claverton Down, Bath BA2 7AY, United Kingdom
			
		\noindent
			{\it E-mail:} \texttt{sgj22@bath.ac.uk}
		
			\medskip

	\noindent \textsc{Zakhar Kabluchko:} Faculty of Mathematics, University of M\"unster, Orl\'eans-Ring 10,
		48149 M\"unster, Germany
		
	\noindent
		{\it E-mail:} \texttt{zakhar.kabluchko@uni-muenster.de}
	
		\medskip
	
	\noindent \textsc{Joscha Prochno:} Faculty of Computer Science and Mathematics,
	University of Passau, Innstrasse 33, 94032 Passau, Germany
	
	\noindent
	{\it E-mail:} \texttt{joscha.prochno@uni-passau.de}

\end{document}